\newtheorem{thm}{Theorem}[section]
\newtheorem{lem}[thm]{Lemma}
\newtheorem{cor}[thm]{Corollary}
\newtheorem{prop}[thm]{Proposition}
\newtheorem{conj}[thm]{Conjecture}
\theoremstyle{definition}
\newtheorem{defn}[thm]{Definition}
\newtheorem{ex}[thm]{Example}
\newtheorem{addendum}[thm]{Addendum}
\theoremstyle{remark}
\newtheorem{rem}[thm]{Remark}
\newtheorem{rems}[thm]{Remarks}
\numberwithin{equation}{section}
\newcommand{\thmref}[1]{Theorem~\ref{#1}}
\newcommand{\secref}[1]{\S\ref{#1}}
\newcommand{\propref}[1]{Proposition~\ref{#1}}
\newcommand{\lemref}[1]{Lemma~\ref{#1}}
\newcommand{\exref}[1]{Example~\ref{#1}}
\newcommand{\addendref}[1]{Addendum~\ref{#1}}
\newcommand{\hocolim}{\operatorname*{hocolim}}
\newcommand{\colim}{\operatorname*{colim}}
\newcommand{\Hom}{\operatorname{Hom}}
\newcommand{\Ext}{\operatorname{Ext}}
\newcommand{\Map}{\operatorname{Map}}
\newcommand{\im}{\operatorname{im}}
\newcommand{\Fun}{\operatorname{Fun}}
\newcommand{\Mod}{\text{-Mod}}
\newcommand{\Alg}{\text{-Alg}}
\newcommand{\uspace}{\underline{\text{\hspace{.13in}}}}
\newcommand{\A}{{\mathcal  A}}
\newcommand{\B}{{\mathcal  B}}
\newcommand{\C}{{\mathcal  C}}
\newcommand{\D}{{\mathcal  D}}
\newcommand{\Com}{{\mathcal  Com}}
\newcommand{\Oo}{{\mathcal  O}}
\newcommand{\Z}{{\mathbb  Z}}
\newcommand{\R}{{\mathbb  R}}
\newcommand{\Cx}{{\mathbb  C}}
\newcommand{\Ham}{{\mathbb  H}}
\newcommand{\Oct}{{\mathbb  O}}
\newcommand{\Sinfty}{\Sigma^{\infty}}
\newcommand{\Oinfty}{\Omega^{\infty}}
\newcommand{\sm}{\wedge}
\newcommand{\ra}{\rightarrow}
\newcommand{\xra}{\xrightarrow}
\newcommand{\la}{\leftarrow}
\newcommand{\xla}{\xleftarrow}
\newcommand{\lra}{\longrightarrow}
\begin{document}

\title[Hurewicz Homomorphisms]{Adams filtration and generalized Hurewicz maps for infinite loopspaces}

\author[Kuhn]{Nicholas J.~Kuhn}

\address{Department of Mathematics \\ University of Virginia \\ Charlottesville, VA 22904}

\email{njk4x@virginia.edu}


\date{July 2, 2018.}

\subjclass[2010]{Primary 55P47; Secondary 18G55, 55N20, 55P43.}

\begin{abstract}  We study the Hurewicz map
$$ h_*: \pi_*(X) \ra R_*(\Oinfty X)$$
where $\Oinfty X$ is the $0$th space of a spectrum $X$, and $R_*$ is the generalized homology theory associated to a connective commutative $S$--algebra $R$.

We prove that the decreasing filtration of the domain associated to an $R$--based Adams resolution is compatible with a filtration of the range associated to the augmentation ideal filtration of the augmented commutative $S$--algebra $\Sinfty (\Oinfty X)_+$.  The proof of our main theorem makes much use of composition properties of this filtration and its interaction with Topological Andr\'e--Quillen homology.

An application is a Connectivity Theorem: Localize away from $(p-1)!$ and suppose $X$ is $(c-1)$--connected with $c>0$.  If $\alpha \in \pi_*(X)$ has Adams filtration $s$ and $|\alpha| < cp^s$, then $h_*(\alpha)=0 \in R_*(\Oinfty X)$.   When specialized to mod $p$ homology, this implies a Finiteness Theorem: if $H^*(X;\Z/p)$ is finitely presented as a module over the Steenrod algebra, then the image of the Hurewicz map in $H_*(\Oinfty X;\Z/p)$ is finite.  We illustrate these theorems with calculations of the mod 2 Hurewicz image of $BO$, its connected covers, and $\Oinfty tmf$, and the mod $p$ Hurewicz image of all the spaces in the $BP$ and $BP\langle n \rangle$ spectra.  En route, we get new proofs of theorems of Milnor and Wilson.

In the special case when $X$ is the suspension spectrum of a space $Z$ and $R = H\Z/2$, we recover results announced by Lannes and Zarati in the 1980s, relating the Adams filtration of $\pi_*^S(Z)$ to Dyer-Lashof length in $H_*(QZ;\Z/2)$, and generalize them to all primes $p$.  For any $X$, we also get parallel results for the Hurewicz map for Morava $E$--theory, where $\pi_*(X)$ is now given the Adams-Novikov filtration.

\end{abstract}

\maketitle

\section{Introduction} \label{introduction}

In this paper we study generalized Hurewicz maps for infinite loopspaces.

This is an old and interesting problem as we illustrate with an example dating back to the 1950's.  In \cite{milnor}, J.Milnor showed that there exists a vector bundle $\xi \ra S^n$ with nonzero top Stiefel--Whitney class if and only if $n=1,2,4,8$.  A stable vector bundle $\xi \ra S^n$ corresponds to $f_\xi \in \pi_n(BO)$, and it is easily checked that $w_n(\xi)\neq 0$ exactly when $h_*(f_{\xi}) \neq 0$, where
$$h_*: \pi_*(BO) \ra H_*(BO;\Z/2)$$
is the Hurewicz map.

Milnor proves his theorem by combining work of Bott on the integrality of Pontryagin classes with work of Wu, and Bott's work took advantage of his then recently proved  periodicity theorems.  Thus Milnor is implicitly using the infinite loop structure of the space $BO$. More precisely, $BO$ identifies as the 0th space $\Oinfty bo$ of the spectrum $bo$, where $bo$ is the 0--connected cover of the real $K$--theory spectrum $KO$.  The Hurewicz map calculated by his theorem can then be written
$$ h_*: \pi_*(bo) \ra H_*(\Oinfty bo;\Z/2).$$

My goal in this paper is to introduce a new approach to Hurewicz maps of the form
$$ h_*: \pi_*(X) \ra R_*(\Oinfty X)$$
where $X$ is a connective $S$--module (i.e., a connective spectrum), and $R$ is a suitable connective commutative $S$--algebra (e.g., $H\Z/p$, $MU$, or $ko$).  Our discovery is that the $R$--based Adams filtration of the domain of $h_*$ is compatible with a decreasing filtration of the range defined and understood using modern work on topological Andr\'e--Quillen homology and Goodwillie calculus.

We note that the Hurewicz maps considered here detect as least as much, and potentially more, than their stable analogues, as there is a canonical commutative diagram
\begin{equation*}
\xymatrix{
&& R_*(\Oinfty X) \ar[d]^{\epsilon_*}  \\
\pi_*(X) \ar[rr]^-{h_*^{st}} \ar[urr]^-{h_*}  && R_*(X), }
\end{equation*}
where $\epsilon: \Sinfty \Oinfty X \ra X$ is the counit of the adjunction $(\Sinfty, \Oinfty)$.  Our work in this paper shows that one can profitably study the unstable Hurewicz map via stable information.

\begin{rem}  The study of the Hurewicz map is interesting only in positive dimensions, as $ h_*: \pi_0(X) \ra R_0(\Oinfty X)$ identifies with the canonical inclusion of the group $\pi_0(X)$ into the group ring $R_0[\pi_0(X)]$. Note also that, if $Y$ is the 0--connected cover of $X$, then the Hurewicz map for $\Oinfty X$ in positive degrees agrees with the Hurewicz map for $\Oinfty Y$.
\end{rem}

\subsection{The Lifting Theorem} \label{main thm stated sec}

Our commutative $S$--algebras will be assumed to satisfy the following hypothesis: the cofiber $R/S$ of the unit map $\eta: S\ra R$ is 0--connected.  Equivalently, $R$ is connective, and $\pi_0(S) \ra \pi_0(R)$ is onto.

The Hurewicz map $h_*$ is induced by the map of spaces
$$ h: \Oinfty X \ra \Oinfty (R \sm \Sinfty X)$$
adjoint to the map of $S$--modules
$$ \tilde h: \Sinfty \Oinfty X \ra R \sm \Sinfty \Oinfty X,$$
obtained by smashing $\eta: S \ra R$ with $\Sinfty \Oinfty X$

We define filtrations of the domain and range of $h$.

We filter the domain via the canonical functorial $R$--based Adams filtration.  Let $S(1)$ be the homotopy fiber of the unit $S \ra R$.  Given an $S$--module $X$, let $X(s) = S(1)^{\sm s} \sm X$, so that there are homotopy fibration sequences $X(s) \ra X(s-1) \ra R \sm X(s-1)$.  The decreasing filtration
$$ X=X(0) \la X(1) \la X(2) \la \dots$$
defines the Adam Spectral Sequence upon applying homotopy, and the stable Hurewicz map $h^{st}_*$ appears as an edge homomorphism. The filtration of $X$ induces the decreasing filtration
$$\pi_*(X) = F_0\pi_*(X) \supset F_1\pi_*(X) \supset F_2 \pi_*(X) \supset \dots,$$
where $F_s\pi_*(X) = \im \{\pi_*(X(s) \ra \pi_*(X)\}$, the subgroup of elements of Adams filtration at least $s$.  Since $X(s) \simeq (\Sigma^{-1}R/S)^{\sm n} \sm X$, our assumption that $R/S$ is 0--connected ensures that if $X$ is connective, so is each $X(s)$.

The filtration of the range takes more explanation.

The spectrum $\Sinfty (\Oinfty X)_+$ is naturally an augmented commutative $S$--algebra, where $Z_+$ denotes the disjoint union of a space $Z$ and a disjoint basepoint.  We define $I(X)$ to be the homotopy fiber of the augmentation $\Sinfty (\Oinfty X)_+ \ra S$.  $I(X)$ is naturally a nonunital commutative $S$--algebra.  Furthermore,
the natural composite $I(X) \ra \Sinfty (\Oinfty X)_+ \ra \Sinfty \Oinfty X$ is a weak equivalence.

Let $S\Alg$ denote the category of nonunital commutative $S$--algebras.  Then $I(X)$ admits a decreasing `augmentation ideal' filtration in $S\Alg$
$$ I(X) \la I^2(X) \la I^3(X) \la \dots $$
satisfying lots of nice properties: see \secref{aug ideal properties sec}.  In particular, if $X$ is connective, there are fibration sequences
$$ I^{d+1}(X) \ra I^d(X) \ra D_dX,$$
where $D_d(X) = X^{\sm d}_{h\Sigma_d}$, the $d$th extended power of $X$.  We will make good use of the fact that the connectivity of $I^d(X)$ agrees with that of $D_d(X)$: if $X$ is $(c-1)$-connected then $I^d(X)$ is $(cd-1)$ connected \cite[Prop.2.22]{kuhn-pereira}.

Let $\tilde h_0: I(X) \ra R \sm I(X)$ be $\eta: S\ra R$ smashed with $I(X)$.  This is a map in $S\Alg$, and corresponds to $\tilde h: \Sinfty \Oinfty X \ra R \sm \Sinfty \Oinfty X$.

Our main theorem now says that our various filtrations are `exponentially' compatible. To state this, we need a little bit more notation.  Let $S\Mod_{\geq 0}$ be the category of connective $S$--modules and let $\Fun(S\Mod_{\geq 0}, S\Alg)$ be the category of finitary functors $F:S\Mod_{\geq 0} \ra S\Alg$.  As will be detailed in \secref{ho cat section},  $\Fun(S\Mod_{\geq 0}, S\Alg)$ has a model category structure with weak equivalences being functors $F$ such that each $F(X)$ is a weak equivalence.  We let $ho\Fun(S\Mod_{\geq 0}, S\Alg)$ denote the associated homotopy category.

\begin{thm}[Lifting Theorem] \label{lifting thm}  Let $R$ be a connective commutative $S$-algebra, such that $\pi_0(S) \ra \pi_0(R)$ is onto. Localized away from $(p-1)!$,  there are natural maps $ \tilde h_s: I(X(s)) \ra R \sm I^{p^s}(X)$ in $ho\Fun(S\Mod_{\geq 0}, S\Alg)$
fitting into a commutative diagram in $ho\Fun(S\Mod_{\geq 0}, S\Alg)$:
\begin{equation*}
\xymatrix{
I(X(0)) \ar[d]^{\tilde h_0}  & I(X(1)) \ar^{\tilde h_1}[d] \ar[l] & I(X(2)) \ar^{\tilde h_2}[d] \ar[l] & \ar[l] \dots \\
R \sm I(X) & R \sm I^p(X) \ar[l] & R \sm I^{p^2}(X) \ar[l] & \ar[l] \dots .}
\end{equation*}
\end{thm}

Recalling that $I(X) \simeq \Sinfty \Oinfty X$. Adjointing, this gives a filtration of the Hurewicz map $h$.

\begin{thm}[Lifting Theorem, adjointed] \label{lifting thm adjointed} Let $R$ be a connective commutative $S$-algebra, such that $\pi_0(S) \ra \pi_0(R)$ is onto.  Localized away from $(p-1)!$, for all connective $X$, one has a natural commutative diagram in the homotopy category of based spaces
\begin{equation*}
\xymatrix{
\Oinfty X(0) \ar[d]^h  & \Oinfty X(1) \ar^{h_1}[d] \ar[l] & \Oinfty X(2) \ar^{h_2}[d] \ar[l] & \ar[l] \dots \\
\Oinfty (R \sm \Sinfty \Oinfty X) & \Oinfty (R \sm I^p(X)) \ar[l] & \Oinfty (R \sm I^{p^2}(X)) \ar[l] & \ar[l] \dots,}
\end{equation*}
and thus a natural commutative diagram
\begin{equation*}
\xymatrix{
\pi_*(X) \ar[d]^{h_*}  & \pi_*(X(1)) \ar^{h_{1*}}[d] \ar[l] & \pi_*(X(2)) \ar^{h_{2*}}[d] \ar[l] & \ar[l] \dots \\
R_*(\Oinfty X) & R_*(I^p(X)) \ar[l] & R_*(I^{p^2}(X)) \ar[l] & \ar[l] \dots .}
\end{equation*}

\end{thm}

\thmref{lifting thm} is proved in \secref{lifting thm proof sec}, using needed background material which is collected and developed in \secref{background sec}.  When $p=2$, the case in which localizing away from $(p-1)!$ has no content, the proof follows from the interesting interaction between the augmentation ideal filtration of a nonunital algebra $I$ and its Topological Andr\'e--Quillen homology $T(I)$ as applied to the nonunital commutative $S$--algebra $I(X)$.  In rough outline, the $s=1$ case follows from the properties of $T(I)$ together with the calculation \cite{basterra-mandell, kuhn-TAQtowers} that $T(I(X)) \simeq X$: see \lemref{lifting lemma 1}. The general case is then proved by induction on $s$ using the composition product on the augmentation ideal filtration constructed in \cite{kuhn-pereira}.   When $p>2$, an extra lifting lemma, \lemref{lifting lemma 2}, is needed to take care of the case $s=1$.  We prove this with a slightly delicate application of Goodwillie calculus to the functor sending an $S$--module to its connective cover.

As in \cite{kuhn-pereira}, we work in the world of the symmetric spectra of \cite{hss}.  Our categories of modules and algebras are based simplicial model categories, and in particular are tensored over based simplicial sets.  If a functor $F: \A \ra \B$ between two such categories is simplicial then it admits natural transformations $\epsilon_K(X): K \otimes F(X) \ra F(K \otimes X)$.  We have the following addendum to \thmref{lifting thm}.

\begin{addendum} \label{addendum prop}  The functors $I^d(X)$ are simplicial for all $d$.  For all based simplicial sets $K$, the diagrams
\begin{equation*}
\SelectTips{cm}{}
\xymatrix{
K \otimes I(X(s)) \ar[d]^{\epsilon_K(X(s))} \ar[rr]^{K \otimes \tilde h_s(X)} && K \otimes (R \sm I^{p^s}(X)) \ar[d]^{\epsilon_K(X)}  \\
I(K \sm X(s)) \ar[rr]^{\tilde h_s(K \sm X)} && R \sm I^{p^s}(K \sm X) }
\end{equation*}
and
\begin{equation*}
\SelectTips{cm}{}
\xymatrix{
K \otimes I^{d+1}(X) \ar[d]^{\epsilon_K(X)} \ar[r] & K \otimes I^{d}(X) \ar[d]^{\epsilon_K(X)} \ar[r] & K \sm D_d(X) \ar[d]^{\Delta}  \\
I^{d+1}(K \sm X)  \ar[r] & I^{d}(K \sm X) \ar[r] & D_d(K \sm X) }
\end{equation*}
commute in $ho\Fun(S\Mod_{\geq 0}, S\Alg)$.  Here the map labeled `$\Delta$' is induced by the diagonal $\Delta: K \ra K^{\sm d}$.
\end{addendum}

We discuss this, and add more detail and explanation, in \secref{background sec} and \secref{lifting thm proof sec}; in particular, see \secref{simplicial functor sec}.

\begin{rem}  The maps $\tilde h_s$ of the Lifting Theorem are maps of commutative $S$--algebras, and not just $S$--modules.  The implication of this for the maps $h_s$ appearing in \thmref{lifting thm adjointed} is that these maps are exponential in an appropriate sense.  In this paper we do not pursue the calculational consequences of this.
\end{rem}

\subsection{Applications: the Connectivity, Finiteness, and Atomicity Theorems}

A tidy consequence of \thmref{lifting thm adjointed} goes as follows.

\begin{thm}[Connectivity Theorem] \label{conn thm} Let $R$ be a connective commutative $S$-algebra, such that $\pi_0(S) \ra \pi_0(R)$ is onto. Localize away from $(p-1)!$.   Suppose $X$ is $(c-1)$--connected with $c>0$.  If $\alpha \in F_s\pi_*(X)$ has $|\alpha| < cp^s$, then $h_*(\alpha)=0 \in R_*(\Oinfty X)$.
\end{thm}

\begin{proof} By definition, an element $\alpha \in F_s\pi_*(X)$ is in the image of $\pi_*(X(s)) \ra \pi_*(X)$.  By \thmref{lifting thm adjointed}, the Hurewicz image of $\alpha$ will then be in the image of $R_*(I^{p^s}(X)) \ra R_*(\Oinfty X)$.  But  if $X$ is $(c-1)$--connected, then $I^{p^s}(X)$ is $cp^s-1$ connected, so that $R_*(I^{p^s}(X))$ is  zero in degrees less than $cp^s$.
\end{proof}

When $p=2$ and $s=1$, this result is a consequence of the Freudenthal suspension theorem which implies that the fiber of $\epsilon: \Sinfty \Oinfty X \ra X$ will be $(2c-1)$--connected if $X$ is $(c-1)$--connected.  But all other cases seem to be results of a sort that haven't  before appeared in the literature.

Specializing to the classical case when $R=H\Z/p$, the Connectivity Theorem has some striking consequences, as we now describe.  Section \ref{mod p applications sec} will have details.

A very general result goes as follows.

\begin{thm}[Finiteness Theorem] \label{finiteness thm} Let $X$ be a bounded below spectrum of finite type\footnote{So $X$ might have nontrivial homotopy groups in a finite number of negative degrees.}. If $H^*(X;\Z/p)$ is a finitely presented module over the mod $p$ Steenrod algebra $A$, then $h_*: \pi_{*}(X) \ra H_*(\Oinfty X;\Z/p)$ has finite dimensional image in positive degrees.
\end{thm}
We sketch the idea here; see \secref{finiteness section} for the full proof.   One first checks that one can reduce to the case when $X$ is 0--connected.  In this case, the Connectivity Theorem says that the kernel of $h_*$ includes all elements represented in the $E_{\infty}$--term of the Adams spectral sequence lying over a  logarithmic curve, using the standard way of depicting the Adams spectral sequence. Meanwhile, under the hypothesis of finite presentation, the Adams $E_2$--term vanishes under a line of positive slope, and thus the same holds at $E_{\infty}$.  Only a finite number of terms can be both over this line and under the logarithmic curve.

Essentially the same argument proves the next theorem.  To explain this, we need some notation.  If $X$ is a spectrum, we let $X_c$ denote its $c^{\text{th}}$ space: the space $\Oinfty \Sigma^c X$.  We observe that the mod p Hurewicz maps of the various spaces $X_c$ and the spectrum $X$ are related via the natural maps $\sigma_c: \Sigma X_c \ra X_{c+1}$ and $\epsilon_c: \Sigma^{-c}\Sinfty X_c \ra X$.  More precisely, the maps $\sigma_c$ induce epimorphisms from
$$ \im\{\pi_*(X) = \pi_{*+c}(X_c) \xra{h_*} H_{*+c}(X_c;\Z/p)\}$$
to
$$\im\{\pi_*(X) = \pi_{*+c+1}(X_{c+1}) \xra{h_*} H_{*+c+1}(X_{c+1};\Z/p)\},$$
and the maps $\epsilon_c$ induce an isomorphism from the colimit of these epimorphisms to $\im\{\pi_*(X) \xra{h^{st}_*} H_{*}(X;\Z/p)\}$.

\begin{thm}[Atomicity Theorem] \label{atomicity thm} Let $X$ be a connective spectrum such that $H^*(X;\Z/p)$ is a finitely presented $A$-module.  Then for all large enough $c$, $\epsilon_c$ induces an isomorphism from $ \im\{\pi_{*+c}(X_c) \xra{h_*} H_{*+c}(X_c;\Z/p)\}$ to $\im\{\pi_*(X) \xra{h^{st}_*} H_{*}(X;\Z/p)\}$.  If $H^*(X;\Z/p)$ is also a cyclic $A$-module, then, for such $c$, $\im\{\pi_{*+c}(X_c) \xra{h_*} \widetilde H_{*+c}(X_c;\Z/p)\}$ is just a one dimensional subspace, and thus $X_c$ is atomic: it can't be written as a product of two spaces in a nontrivial way.
\end{thm}

This is proved in \secref{atomic section}.

\begin{rems} For a discussion of aspects of atomicity see \cite{adams kuhn}.  Andy Baker notes that the last part of this theorem says that when $H^*(X;\Z/p)$ is a cyclic $A$-module, then, for large $c$, $X_c$ is {\em minimally atomic} in the sense studied by Baker and Peter May in \cite{baker may}.  We note with curiosity that the spectra $X$ for which these last two theorems apply are precisely the {\em fp-spectra} studied, using Brown-Comenetz duality and chromatic localization, by Mark Mahowald and Charles Rezk in \cite{mahowald rezk}.
\end{rems}

Here are some specific examples, illustrating these last two theorems.

The mod 2 Adams spectral sequence charts for the connective real $K$--theory spectrum $ko$ are well known and easy to compute, starting from the calculation that $H^*(ko;\Z/2) = A//A(1)$, where $A(1)$ is the subalgebra generated by $Sq^1$ and $Sq^2$.  One can then easily deduce similar information for the connective covers of $ko$.  The Connectivity Theorem plus inspection of the Adams $E_2$--pages immediately implies Milnor's theorem and higher connected analogues.

\begin{thm} \label{bo thm} The image of $h_*: \pi_*(BO) \ra H_*(BO;\Z/2)$ is one dimensional in degrees 1,2,4, and 8. The image of $h_*: \pi_*(BSO) \ra H_*(BSO;\Z/2)$ is one dimensional in degrees 2,4, and 8. The image of $h_*: \pi_*(BSpin) \ra H_*(BSpin;\Z/2)$ is one dimensional in degrees 4 and 8. For $c\geq 8$ with $c \equiv 0,1,2$, or $4 \mod 8$, the mod 2 Hurewicz image for the $(c-1)$--connected cover\footnote{This includes $BString$ when $c=8$.} of $BO$ is just the bottom one dimensional subspace in degree $c$.
\end{thm}

Details are in  \secref{bo section}.

An analysis of the Adams spectral sequence for the connective topological modular forms spectrum $tmf$ has also been done \cite{henriques}, starting from the calculation that $H^*(tmf;\Z/2) = A//A(2)$, where $A(2)$ is the subalgebra generated by $Sq^1$, $Sq^2$, and $Sq^4$.  We use this work together with the Connectivity Theorem to quite easily show the following.

\begin{thm} \label{tmf thm} In positive degrees, $h_*: \pi_*(tmf) \ra H_*(\Oinfty tmf;\Z/2)$ has five dimensional image, with basis the image of elements $\eta, \eta^2, \nu, \nu^2, c_4$ of degrees 1,2,3,6,8.
\end{thm}

This is proved in \secref{tmf section}.

Fixing a prime $p$, the $n$th Johnson--Wilson spectra $BP\langle n \rangle$ is a $p$--local spectrum satisfying $H^*(BP\langle n \rangle;\Z/p) = A//E(n)$, where $E(n)$ is the subalgebra generated by the Milnor primitives $Q_0, \dots, Q_n$ \cite[Prop.1.7]{wilson II}.  Just using this information, the Connectivity Theorem easily implies our next result.

\begin{thm} \label{BPn thm} If $c> 2(p^n-1)/(p-1)$, then the mod $p$ Hurewicz image of $BP\langle n \rangle_c$ is just the bottom $\Z/p$ in degree $c$.  Thus $BP\langle n \rangle_c$ is atomic in this range.
\end{thm}

The last statement recovers a 1975 theorem of Steve Wilson \cite{wilson II}.

It is natural to wonder if, in the Finiteness Theorem above, the hypothesis that $H^*(X;\Z/p)$ be a finitely presented $A$--module can be weakened to finitely generated. Unfortunately, the answer is no.

\begin{ex} \label{BP ex} The $p$--local Brown--Peterson spectrum $BP$ \cite{ravenel} has mod $p$ cohomology that is cyclic as an $A$--module. But in \propref{BP prop} we give an elementary proof that the Hurewicz map for $BP_0$ has infinite dimensional image in positive degrees.  More deeply, this even holds for $BP_c$ for all $c$: the main theorem of  \cite{wilson II} says that $BP_c$ splits as an infinite product of spaces of the form occurring in \thmref{BPn thm}, and so the mod $p$ Hurewicz image is certainly infinite.
\end{ex}

Details about \thmref{BPn thm} and \exref{BP ex} are in \secref{BP section}, where we continue and compute the mod $p$ Hurewicz maps for all the spaces $BP_c$ and $BP\langle n \rangle_c$.

Still consistent with \exref{BP ex} is the following conjectural strengthening of the Finiteness Theorem.

\begin{conj}  If $H^*(X;\Z/p)$ is a finitely generated $A$--module, then there is an $s$ such that the kernel of $h_*: \pi_*(X) \ra H_*(\Oinfty X;\Z/p)$ contains all elements of Adams filtration at least $s$.
\end{conj}

This is also consistent with the famous Curtis Conjecture \cite{curtis}.

\begin{conj} The kernel of $h: \pi_*^S(S^0) \ra H_*(QS^0;\Z/2)$ consists of all classes except those of odd Hopf or Kervaire invariant.
\end{conj}

\subsection{Applications: Lannes--Zarati theory and  Morava $E$--theory} \label{LZ apps sec}

There are two interesting situations when the augmentation ideal filtration of $R \sm \Sinfty \Oinfty X$, or some localization of this, splits:
\begin{itemize}
\item when $X = \Sinfty Z$ and $R$ is arbitrary.
\item when $X$ and $R$ are arbitrary, and the filtration is localized with respect to the $n$th Morava $K$--theory.
\end{itemize}

This has calculational implications when combined with \thmref{lifting thm adjointed}, as the Hurewicz map then induces a map from an Adams spectral sequence to a trivial spectral sequence.  In the first case, we recover and extend work announced by Lannes and Zarati in the 1980's.  The second case yields a chromatic version of Lannes-Zarati theory.

We now describe how this goes.

When $X = \Sinfty Z$, with $Z$ a based space, our Hurewicz map has the form
$$ h_*: \pi_*^S(Z) \lra  R_*(QZ),$$
where $QZ = \Oinfty \Sinfty Z$, as usual.

In this case, the filtration for $\Sinfty QZ$ is well known to split as $S$--modules.  This was first proved by D.S.Kahn \cite{kahn}; see \cite{kuhn-TAQtowers} for a modern presentation that includes the case when $Z$ is possibly not connected, and multiplicative structure.
The Adams spectral sequence $\{E_r^{s,t}(Z)\}$ thus maps to a trivial spectral sequence $\{\mathcal E_r^{s,t}(Z) \}$ having $\mathcal E_1^{s,t}(Z)  = \bigoplus_{d=p^s}^{p^{s+1}-1} R_{t-s}(D_d(Z))$.  Most of the next theorem is a formal consequence of this.  The refinement of the last statement is a consequence of \addendref{addendum prop} specialized to $K=S^1$.

\begin{thm} \label{space thm}  Let $R$ be as in \thmref{lifting thm}.  Localized at a prime $p$, the refinement of the Hurewicz map
$$ h_*: \pi_*^S(Z) \lra  R_*(QZ)$$
as in \thmref{lifting thm adjointed} induces maps
$$ h_r^{s,t}: E_r^{s,t}(Z) \ra \bigoplus_{d=p^s}^{p^{s+1}-1} R_{t-s}(D_d(Z))$$
all of which factor through a common map
$$ h^{s,t}: Z_1^{s,t}(Z)/B_{\infty}^{s,t}(Z) \ra \bigoplus_{d=p^s}^{p^{s+1}-1} R_{t-s}(D_d(Z)).$$
Furthermore, if $Z \simeq \Sigma W$ for some space $W$, the components of $h^{s,t}$ are zero unless $d=p^s$, and, indeed, the image of $h^{s,t}$ is contained in the image of $\Delta_*: R_{t-s-1}(D_{p^s}(W)) \ra R_{t-s}(D_{p^s}(Z))$.
\end{thm}
Here $Z_r^{s,t}(Z)$ and $B_{r}^{s,t}(Z)$ are the $r$-cycles and $r$-boundaries.

\begin{proof}  Our maps $\pi_*(h_s): \pi_*((\Sinfty Z)(s)) \ra R_*(I^{p^s}(\Sinfty Z))$ induce a map of spectral sequences $\{E_r^{s,t}(Z)\} \xra{\{h_r^{s,t}\}} \{\mathcal E_r^{s,t}(Z)\}$. Since the codomain spectral sequence collapses at $E_1$, it follows that, for fixed $(s,t)$, all the maps $h_r^{s,t}$ factor through a single map $h^{s,t}: Z_1^{s,t}(Z)/B_{\infty}^{s,t}(Z) \ra \mathcal E_1^{s,t}(Z)$.

When $Z = \Sigma W$, \addendref{addendum prop} implies that we have a commutative diagram
\begin{equation*}
\SelectTips{cm}{}
\xymatrix{
Z_1^{s,t-1}(W)/B_{\infty}^{s,t-1}(W) \ar[d]^{\epsilon_{S^1}(W)} \ar[r]^-{h^{s,t-1}} & \mathcal E_1^{s,t-1}(W) \ar[d]^{\epsilon_{S^1}(W)} \ar@{=}[r] &  \bigoplus_{d=p^s}^{p^{s+1}-1} R_{t-s-1}(D_d(W)) \ar[d]^{\bigoplus_d \Delta_*} \\
Z_1^{s,t}(Z)/B_{\infty}^{s,t}(Z) \ar[r]^-{h^{s,t}} & \mathcal E_1^{s,t}(Z)\ar@{=}[r] & \bigoplus_{d=p^s}^{p^{s+1}-1} R_{t-s}(D_d(Z)) }
\end{equation*}
The left vertical map is an isomorphism as the Adams spectral sequence for $\Sigma W$ agrees with the shift of the Adams spectral sequence for $W$.  Meanwhile, since we have localized at $p$, all the components of the right vertical map are zero except when $d=p^s$, as $\Delta: \Sigma D_d(W) \ra D_d(\Sigma W)$ is null in those cases, by standard transfer arguments.  The last assertion of the theorem follows.

\end{proof}

We consider what this theorem says when $R = H\Z/p$.  Write $H^*(Z)$ and $H_*(Z)$ for the mod $p$ cohomology and homology of $Z$, which are respectively left and right $A$--modules.  In this case, there is a natural isomorphism
$$ \mathcal R_s H_*(Z) \simeq \im \Delta_* \subseteq H_*(D_{p^s}(Z)),$$
where $\mathcal R_s$ is a well known functor of the category of locally finite right $A$--modules: roughly put, $\mathcal R_s M$ is the module generated by applying all sequences of Dyer-Lashof operations of length $s$ to a module $M$.  (See \cite{powell, KM13} for modern presentations of the interesting properties of these `Singer functors'.)

We let $\mathcal R M = \bigoplus_{t=0}^{\infty}\mathcal R_t M$.  This has a decreasing filtration with $F_s\mathcal R M = \bigoplus_{t=s}^{\infty} \mathcal R_t M$.  One has a canonical inclusion
$ \mathcal RH_*(Z) \subset H_*(QZ)$
which, when $Z$ is a suspension, identifies with the module of primitives in $H_*(QZ)$.  Thus the Hurewicz map for $QZ$ factors
$$ \pi_*^S(Z) \lra \mathcal RH_*(Z) \subset H_*(QZ).$$

\thmref{space thm} thus tells us the following.

\begin{cor} If a space $Z$ is a suspension, the mod $p$ Hurewicz map
$$ h_*: \pi_*^S(Z) \lra \mathcal RH_*(Z) \subset H_*(QZ)$$
is filtration preserving and
induces commutative diagrams
\begin{equation*}
\SelectTips{cm}{}
\xymatrix{
Ext_{A}^{s,t}(H^*(Z),\Z/p) \ar@{->>}[d] \ar[rrd]^-{h_2^{s,t}} &&   \\
Z^2_{s,t}(Z)/B^{\infty}_{s,t}(Z)\ar[rr]^-{h^{s,t}}  &&    (\mathcal R_sH_*(Z))_{t-s}. \\
E^{\infty}_{s,t}(Z) \ar@{>->}[u] \ar[rru]_-{h_{\infty}^{s,t}} && }
\end{equation*}
\end{cor}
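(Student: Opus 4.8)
The plan is to read off this corollary from \thmref{space thm} specialized to $R = H\Z/p$, after making the two standard identifications at the ends of the diagram. First I would recall that for $R = H\Z/p$ the $R$--based Adams spectral sequence is the classical mod $p$ Adams spectral sequence, so Adams's description of its $E^2$--page supplies a natural isomorphism
\[
E^2_{s,t}(Z) \;\cong\; \Ext_\A^{s,t}(H^*(Z),\Z/p),
\]
which replaces the upper-left corner of the diagram of \thmref{space thm}. The surjection $E^2_{s,t}(Z) \twoheadrightarrow Z^2_{s,t}(Z)/B^{\infty}_{s,t}(Z)$, the injection $E^{\infty}_{s,t}(Z) \hookrightarrow Z^2_{s,t}(Z)/B^{\infty}_{s,t}(Z)$, and the factorizations of $h^2_{s,t}$ and $h^{\infty}_{s,t}$ through the common map $h_{s,t}$ are then quoted verbatim from \thmref{space thm}, so the only work is in identifying the target.

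By \thmref{space thm} the target of $h^r_{s,t}$ is $\bigoplus_{k=p^s}^{p^{s+1}-1} H_{t-s}(D_k(Z))$, and, since $Z \simeq \Sigma W$ is a suspension, the image of $h_{s,t}$ is contained in $\im\{\Delta_*\colon H_{t-s-1}(D_{p^s}(W)) \to H_{t-s}(D_{p^s}(Z))\}$. Applying the natural isomorphism $\mathcal R_s H_*(Z) \cong \im\Delta_* \subseteq H_*(D_{p^s}(Z))$ recorded before the corollary (see \cite{powell, KM13}) rewrites the target as $\mathcal R_s H_*(Z) \subseteq H_*(D_{p^s}Z)$, and the diagram of \thmref{space thm} becomes exactly the asserted one. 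For the ``filtration preserving'' clause I would use the canonical inclusion $\mathcal R H_*(Z) \subset H_*(QZ)$ as the submodule of primitives, valid for a suspension via the stable splitting $\Sinfty QZ \simeq \bigvee_k D_k(Z)$ of \cite{kuhn-TAQtowers}, through which the Hurewicz map factors; applying $\pi_*$ to the map of filtrations in \thmref{main thm} for $X = \Sinfty Z$ with $d = p$ shows that $F_s\pi_*^S(Z) = \pi_*(\Oinfty X(s))$ has Hurewicz image lying in the image of $H_*(I^{p^s}(Z)) \to H_*(QZ)$, which by the splitting is $\bigoplus_{k\geq p^s} H_*(D_k(Z))$; intersecting with the primitives $\bigoplus_t \mathcal R_t H_*(Z)$ and using $\mathcal R_t H_*(Z) \subseteq H_*(D_{p^t}(Z))$ leaves precisely $F_s\mathcal R H_*(Z) = \bigoplus_{t\geq s}\mathcal R_t H_*(Z)$.

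The one genuinely substantive ingredient — everything else being bookkeeping internal to the Adams spectral sequence and to \thmref{space thm} — is the identification of $\im\Delta_*$ with the Singer functor $\mathcal R_s$, naturally in the right $\A$--module $H_*(Z)$, together with the compatibility of the Singer grading with the ``$p$--exponential'' Goodwillie filtration after passage to primitives in $H_*(QZ)$. I expect this to be the main obstacle: one must verify that the extended-power stabilization map $\Delta_*\colon H_*(\Sigma D_{p^s}(W)) \to H_*(D_{p^s}(Z))$ has image exactly the submodule generated by length-$s$ sequences of Dyer--Lashof operations, that this identification is natural and compatible with the Steenrod-module structure, and that under the stable splitting of $QZ$ it is carried to the primitives in the prescribed filtration degree. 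These facts rest on Singer's and Powell's analysis of the functors $\mathcal R_s$ and on the known interaction of the stable splitting of $QZ$ with Dyer--Lashof operations.
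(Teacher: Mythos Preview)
Your proposal is correct and matches the paper's approach: the paper gives no separate proof of this corollary, simply stating that ``\thmref{space thm} tells us the following,'' and your write-up is precisely the unpacking of that sentence---specialize $R=H\Z/p$, use the standard identification $E^2_{s,t}\cong \Ext_\A^{s,t}(H^*(Z),\Z/p)$, and replace the target via the isomorphism $\mathcal R_s H_*(Z)\cong \im\Delta_*$ recorded just before the corollary (with the cited references). Your discussion of the ``filtration preserving'' clause and of the Singer-functor identification as the one external input is accurate and appropriately flagged; the paper treats that identification as a quoted fact rather than something to be reproved.
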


Without calculation, we have thus recovered, and extended to odd primes, 2--primary results of Lannes and Zarati announced in \cite{lz1}. (\cite{lz3} is a partially completed manuscript. See also \cite{L88,lz2}.)  Though we don't show this here, Lannes and Zarati's work suggests that $h_2^{s,t}$ is surely the specialization of an explicit algebraic natural transformation
$$ Ext_{A}^{s,t}(M^{\vee},\Z/p) \ra (\mathcal R_s M)_{t-s},$$
where $M^{\vee}$ denotes the dual of a right $A$--module $M$.

\begin{rem} Though our methods are very different than theirs, the starting point for this paper was the author's realization that these old results could be naturally placed within the context of Goodwillie calculus.
\end{rem}

Now we consider the other case mentioned above when the augmentation ideal filtration splits.
Let $E_n$ denote the $n$th Morava $E$--theory at a fixed prime $p$, and then let $E_*(Y) = \pi_*(L_{K(n)}(E_n \sm Y))$.  There is a natural map
$$ MU \sm Y \ra L_{K(n)}(E_n \sm Y)$$
for all spectra $Y$, and thus a natural map of filtrations
$$ MU \sm I^d(X) \ra L_{K(n)}(E_n \sm I^d(X))$$
for all spectra $X$.

The main result in \cite{kuhn-TAQtowers} implies that the filtration for $L_{K(n)}(\Sinfty \Oinfty X)$ has a natural splitting analogous to the splitting of the filtration for $\Sinfty QZ$ as described above.  It follows that the filtration for $L_{K(n)}(E_n \sm \Sinfty \Oinfty X)$ is similarly split.

We use this, together with \thmref{lifting thm} as follows.  We apply \thmref{lifting thm} to the case when $R = MU$, to get a map of spectral sequences from the Adams-Novikov spectral sequence, which we denote $\{E_r^{*,*}(X)\}$, to the augmentation ideal filtration spectral sequence of $MU_*(\Oinfty X)$.  Then this augmentation ideal spectral sequence maps to the collapsing spectral sequence computing $E_*(\Oinfty X)$, which we denote $\{\mathcal E_r^{*,*}(X)\}$.

Just as before, this splitting and \thmref{lifting thm} show that the map of spectral sequences on the $E_2$--page determines the map on the $E_{\infty}$--page.  In the next theorem, let $E_r^{*,*}(X)$ denote the $r$th page of the Adams--Novikov spectral sequence converging to $\pi_*(X)$, with cycles and boundaries $Z_r^{*,*}(X)$ and $B_r^{*,*}(X)$.

\begin{thm} \label{Morava E theory thm}  Fix a prime $p$ and $n$.  For connective spectra $X$, the $n$th Morava $E$--theory  Hurewicz map
$$ h_*: \pi_*(X) \lra  E_*(\Oinfty X),$$
refined by applying \thmref{lifting thm adjointed} to the first map in the composite
$$ h_*: \pi_*(X) \ra  MU_*(\Oinfty X) \ra E_*(\Oinfty X),$$
induces maps
$$ h_r^{s,t}: E_r^{s,t}(X) \ra \bigoplus_{k=p^s}^{p^{s+1}-1} E_{t-s}(D_k(X))$$
all of which factor through a common map
$$ h^{s,t}: Z_1^{s,t}(X)/B_{\infty}^{s,t}(X) \ra \bigoplus_{k=p^s}^{p^{s+1}-1} E_{t-s}(D_k(X)).$$
Furthermore, if $X$ is 0--connected, the components of $h^{s,t}$ are zero unless $k=p^s$, and, indeed, the image of $h^{s,t}$ is contained in the image of $$\Delta_*: E_{t-s-1}(D_{p^s}(\Sigma^{-1}X)) \ra E_{t-s}(D_{p^s}(X)).$$
\end{thm}

\begin{cor} If $X$ is 0--connected, there is a natural diagram
\begin{equation*}
\SelectTips{cm}{}
\xymatrix{
Ext_{MU_*(MU)}^{s,t}(MU_*, MU_*(X)) \ar@{->>}[d] \ar[rrd]^-{h_2^{s,t}} &&   \\
Z_2^{s,t}(Z)/B_{\infty}^{s,t}(X)  \ar[rr]^-{h^{s,t}} &&    \im \Delta_* \subseteq E_{t-s}(D_{p^s}(X)). \\
E^{\infty}_{s,t}(X) \ar@{>->}[u] \ar[rru]_-{h_{\infty}^{s,t}} && }
\end{equation*}
\end{cor}

\begin{rems} { (a)} If $E_*(X)$ is a finitely generated free $E_*$--module, there is much known about $\im \Delta_* \subset E_{*}(D_{p^s}(X))$; in particular, it is a functor of $E_*(X)$, viewed as an $E_*$--module \cite[Remark 7.4]{rezk}. One might expect that the map $h_2^{*,*}$ has an algebraic description in this case.

{(b)} As $E$ is not bounded below, convergence of the (trivial) tower spectral sequence is both problematic and subtle.  This is studied in detail in \cite{kuhn-TAQtowers}. \end{rems}

\noindent{\bf Acknowledgements} \ The inspiration for our lifting theorem comes from old results of Lannes and Zarati as announced in \cite{lz1}.  Our first thoughts about this, proved only for $p=2$ and modulo the development of the composition product structure on the augmentation ideal filtration, were presented in the September, 2007 algebraic topology workshop in Oberwolfach \cite{kuhnOber07}.  

The writing of the first version of this paper was done during a stay at the Mathematical Sciences Research Institute in Berkeley during the spring 2014 algebra topology program.  The author is grateful for the opportunity to be part of this program which was supported by N.S.F. grant 0932078000, and also for support through N.S.F. grant 0967649.  
The second version, with more developed applications discovered after the MSRI visit, was written while visiting the University of Sheffield during the first half of 2017. The author thanks the Sheffield Mathematics Department for its hospitality.

We thank the referee for a careful reading, leading to a more careful exposition of background preliminaries.

Finally, the author thanks Greg Arone and Luis Pereira for useful conversations about this work.

\section{Background material}\label{background sec}

\subsection{Operads and their categories of modules and algebras}

We recall some definitions related to operads; see, e.g., \cite{fresse} for more detail.

A symmetric sequence in a category $\C$ consists of a sequence
$$X = \{X(0),X(1),X(2),\dots\},$$ where $X(n)$ is an object in $\C$ equipped with an action of the $n$th symmetric group $\Sigma_n$.

If $\C$ is a based symmetric monoidal cocomplete category with product $\sm$, unit $\mathbb I$, and initial/final object $*$, the category of symmetric sequences in $\C$ admits a composition product $\circ$ defined by
\begin{equation*}
 (X \circ Y)(n) = \bigvee_d X(d) \sm_{\Sigma_d} \left(\bigvee_{\phi: \bf n \ra \bf d} Y(n_1(\phi)) \sm \dots \sm Y(n_d(\phi))\right),
\end{equation*}
where ${\bf n} = \{1,\dots,n\}$ and $n_i(\phi)$ is the cardinality of $\phi^{-1}(i)$.
With this product, the category of symmetric sequence in $\C$ is monoidal, with unit object $\mathbb I(1) = (*,\mathbb I,*,*, \dots)$.

An operad $\Oo$ is then a monoid in this category, and one makes sense of left $\Oo$--modules, right $\Oo$--modules, and $\Oo$--bimodules in the usual way.  An $\Oo$--algebra is a left $\Oo$--module $I$ concentrated in level 0: $I(n)=*$ for all $n>0$.
If $M$ is a right $\Oo$--module, and $N$ is a left $\Oo$--module, the symmetric sequence $M \circ_{\Oo} N$ can be defined as the coequalizer in the category of symmetric sequences of the two evident maps
$$ M \circ \Oo \circ N \begin{array}{c} \longrightarrow \\[-.1in] \longrightarrow
\end{array} M \circ N.$$
It is easily checked that if $M$ is an $\Oo$--bimodule and $I$ is an $\Oo$--algebra, then $M \circ_{\Oo} I$ is again an $\Oo$--algebra.

Finally, we let $\Com$ denote the reduced commutative operad:  $\Com(0)=*$ and $\Com(n) = \mathbb I$ for $n>0$.

\subsection{Categories of modules and algebras of symmetric spectra}

Our category of $S$--modules will be the symmetric monoidal category of symmetric spectra of \cite{hss}. If $R$ is a commutative $S$--algebra, we let $R\Mod$ denote the category of $R$--modules,  and  $R\Alg$ denote the category of {\em nonunital} commutative $S$--algebras. We also make use of $R\Mod^{\Sigma}$, the category of reduced symmetric sequences in $R\Mod$: symmetric sequences $M = \{M(n)\}$ satisfying $M(0)=*$.  It is then useful to observe that $R\Alg$ can be regarded as the category of $\Com$--algebras in $R\Mod$.

$R\Mod$, $R\Mod^{\Sigma}$, and $R\Alg$ are based simplicial model categories, with model structures as in \cite{kuhn-pereira,PereiraHHA}, which piggy-back off the positive model structure on $S\Mod$ exploited in \cite{shipley}. (See \cite[Chapter 4]{hirschhorn} for properties of simplicial model categories.)

As part of this structure, the categories are tensored over $sSet_*$, the category of based simplicial sets.

For $K \in sSet_*$ and $M \in R\Mod$, $K \otimes M$ is just the smash product $K \sm M$. Similarly, if $M = \{M(n)\} \in R\Mod^{\Sigma}$, $(K \otimes M)(n) = K \sm M(n)$.

For $I \in R\Alg$, we describe a useful model for $K \otimes I$.  Given $K \in sSet_*$, let $K^{\sm}$ denote the symmetric sequence with $K^{\sm}(n) = K^{\sm n}$.  This is a $\Com$--bimodule in $sSet_*$ as follows.    Given an epimorphism $\phi: {\bf n} \ra {\bf d}$, let $n_i = \#\phi^{-1}(i)$. Then
\begin{itemize}
\item the left $\Com$--module structure on $K^{\sm}$ is induced by the associated concatenation
$ K^{\sm n_1} \sm \dots K^{\sm n_d} \xra{\sim} K^{\sm n}$,
\item the right $\Com$--module structure on $K^{\sm}$ is induced by the associated diagonal $K^{\sm d} \ra K^{\sm n}$.
\end{itemize}

\begin{lem} \label{tensor lem} $K \otimes I = K^{\sm} \circ_{\Com} I$.
\end{lem}
Said in slightly different language, this is observed in \cite[\S 5]{kuhn K tensor R}.  See also \cite[\S 1.4]{PereiraOalg}.

\subsection{Some simplicial functors} \label{simplicial functor sec}

A simplicial functor $F: \A \ra \B$ between two based simplicial model categories is a functor inducing appropriately compatible maps of simplicial sets
$$ F: \Map_{\A}(X,Y) \ra \Map_{\B}(F(X),F(Y))$$
on mapping spaces \cite[Definition 9.8.1]{hirschhorn}.

There are then natural transformations
$$ \epsilon_K: K \otimes F(X) \ra F(K\otimes X)$$
defined as the adjoint to the composite
$$ K \xra{e} \Map_{\A}(X,K \otimes X) \xra{F} \Map_{\B}(F(X),F(K \otimes X)),$$
where $e$ is the unit of the adjoint pair
( \ $ \underline{\text{\hspace{.13in}}}\otimes X, \Map_{\A}(X,\underline{\text{\hspace{.13in}}})$\ ).
These satisfy an obvious unital condition, and a compatibility condition: given $K, L \in sSet_*$, $\epsilon_{K \sm L}$ identifies with the composite $\epsilon_K \circ (K \otimes \epsilon_L)$, using the identification $(K \sm L) \otimes X = K \otimes (L \otimes X)$.

Conversely, if a functor $F$ admits such a compatible family then it is simplicial \cite[Theorem 9.8.5]{hirschhorn}.

The next result follows formally from the definitions.

\begin{lem} \label{nat trans lem} If $\theta: F \ra G$ is a natural transformation between two such simplicial functors, then the diagram
\begin{equation*}
\SelectTips{cm}{}
\xymatrix{
K \otimes F(X) \ar[d]^{\epsilon_K} \ar[rr]^{K \otimes \theta(X)} && K \otimes G(X) \ar[d]^{\epsilon_K}  \\
F(K \otimes X) \ar[rr]^{\theta(K \otimes X)} && G(K \otimes X) }
\end{equation*}
commutes.
\end{lem}

We now run through basic examples important for us.

\begin{ex}  Let $D_d: R\Mod \ra R\Mod$ be the $d$th extended power functor: $D_d(X) = X^{\sm d}_{h\Sigma_d}$.  Then $\epsilon_K: K \sm D_d(X) \ra D_d(K \sm X)$ is induced by the diagonal $\Delta: K \ra K^{\sm d}$.  More precisely, it is obtained from the $\Sigma_d$ equivariant map $K \sm X^{\sm d} \xra{\Delta \sm 1} K^{\sm d} \sm X^{\sm d} \simeq (K \sm X)^{\sm d}$ by passing to homotopy orbits.
\end{ex}

\begin{ex} \label{tensor example}  Given $M,N \in R\Mod^{\Sigma}$, and $K \in sSet_*$, the diagonal maps for $K$ similarly induce maps from
$$K \sm M(d) \sm N(n_1) \sm \dots N(n_d)$$
to
$$M(d) \sm (K \sm N(n_1)) \sm \dots (K \sm N(n_d))$$
for all choices of $d$ and $n_1, \dots, n_d$.
A wedge of such maps induces
$$\epsilon_K: K \sm (M \circ N) \ra M \circ (K \sm N).$$

The obvious compatibility among these maps show that, for a fixed symmetric sequence $M$, the functor
$ M \circ \uspace : R\Mod^{\Sigma} \ra R\Mod^{\Sigma}$
is simplicial.
\end{ex}

\begin{ex}  Let $M$ be a $\Com$--bimodule in $R\Mod$.  The maps in the last example induce natural transformations
$$\epsilon_K: K \otimes (M \circ_{\Com} I) \ra M \circ_{\Com} (K \otimes I)$$
for all $I \in R\Alg$, witnessing that
$ M \circ_{Com} \uspace: R\Alg \ra R\Alg$
is simplicial.
\end{ex}

\begin{ex} \label{bar ex} If $I$ is in $R\Alg$ and $M$ is a $\Com$--bimodule, let $B(M,\Com,I) \in R\Alg$ denote the bar construction: the realization of the simplicial object $B_{\bullet}(M,\Com,I)$ in $R\Mod$ defined by
$$ B_n(M,\Com,I) =  M \circ \overbrace{\Com \circ \cdots \circ \Com}^n \circ I.$$

If $I$ is a cofibrant $R$--algebra, then the natural map
$$B(M,\Com,I) \ra M \circ_{\Com} I$$ is a weak equivalence \cite[Proposition 2.9]{kuhn-pereira}.  As illustrated in \cite{kuhn-pereira}, $B(M,\Com,I)$ is well suited for homotopical analysis.

Again, the maps in \exref{tensor example} induce
natural transformations
$$\epsilon_K: K \otimes B(M,\Com,I) \ra B(M,\Com,K\otimes I)$$
for all $I \in R\Alg$, witnessing that
$B(M, \Com, \uspace): R\Alg \ra R\Alg$
is simplicial.
\end{ex}

\subsection{The augmentation ideal filtration and Andr\'e--Quillen homology} \label{aug ideal properties sec}

We collect needed results from \cite{kuhn-pereira}.

\begin{defn} In the category $R\Mod$, let $\Com^{\geq d}$ be the $\Com$--bimodule with
\begin{equation*}
\Com^{\geq d}(n) =
\begin{cases}
 R & \text{for } n \geq d \\ * & \text{for } n<d.
\end{cases}
\end{equation*}
Then let $ (\uspace)^d: R\Alg \ra R\Alg$, the {\em $d$th power of the ideal functor},  be the simplicial functor defined by letting $I^d = B(\Com^{\geq d},\Com,I)$. The maps of $\Com$-bimodules $\Com^{\geq d+1} \ra \Com^{\geq d}$ then induce natural transformations $$I^{d+1} \ra I^d.$$
\end{defn}

\begin{defn} In the category $R\Mod$, let $R(1)$ be the $\Com$--bimodule with
\begin{equation*}
R(1)(n) =
\begin{cases}
 R & \text{for } n = 1 \\ * & \text{for } n>1.
\end{cases}
\end{equation*}
Then let $ T: R\Alg \ra R\Mod$, the {\em Andr\'e--Quillen homology functor}, be the simplicial functor defined by letting $T(I) = B(R(1),\Com,I)$.  The map of $\Com$-bimodules $\Com \ra R(1)$ then induces a natural transformation $$I \ra T(I).$$
\end{defn}

\begin{thm}\cite{kuhn-pereira}  \label{I properties thm} For all $R$, the functor $$T: R\Alg \ra R\Mod$$ and the natural augmentation ideal filtration in $R\Alg$,
$$ I \la I^2 \la I^3 \la \dots, $$
satisfy the following properties. \\

\noindent{\bf (a)} \ $T$ is the derived left adjoint to the functor $z: R\Mod \ra R\Alg$ giving an $R$--module the trivial $R$--algebra structure; in the associated homotopy categories one has
$$[T(I),M]_{R\Mod} \simeq [I, z(M)]_{R\Alg}.$$

\noindent{\bf (b)} \ There is a homotopy fibration sequence in $R\Alg$
$$ I^{d+1} \ra I^d \ra z(D_dT(I)).$$

\noindent{\bf (c)} \ There are natural composition products
$$ \mu: (I^c)^d \ra I^{cd},$$
compatible as $c$ and $d$ vary, and which are homotopic to the identity if either $c$ or $d$ equals 1.
\end{thm}

\begin{rem} We give some historical context.  The first two properties are well known.  $T(I)$ can be informally viewed as $I/I^2$, and it was first defined, along with its defining property (a), in  \cite{basterra}.  The construction of the augmentation ideal filtration, or more precisely, the associated tower under $I$,
$ I/I^2 \la I/I^3 \la I/I^4 \la \dots$,
goes back to \cite{minasian} and \cite{kuhn-TAQtowers}.  The paper \cite{HH} also constructs this tower, together with the analogue of property (b).    Finally, the construction of natural composition products as in (c), compatible with all these other properties, was the main goal of \cite{kuhn-pereira}: they are induced by the maps of $\Com$--bimodules $$\Com^{\geq d} \circ_{\Com} \Com^{\geq c} \ra \Com^{\geq cd}$$
induced by the operad structure of $\Com$. \\
\end{rem}

Critical to us is that the constructions in the last theorem behave well with respect to change of rings.  To state this, we use the following more precise notation: if $R$ is a commutative $S$--algebra, and $Y$ is an $R$--module then $D_d^R(Y) = (Y^{\sm_R^d})_{h\Sigma_d}$.

\begin{thm}\cite{kuhn-pereira} \label{change of rings thm} Suppose $A \ra B$ is a map of unital commutative $S$--algebras, $I \in A\Alg$, and $J = B \sm_A I \in B\Alg$.  Then there are natural equivalences  $B \sm_A T(I) \xra{\sim} T(J)$ in $B\Mod$ and $B \sm_AI^d \xra{\sim} J^d$ in $B\Alg$ such that the diagram
\begin{equation*}
\SelectTips{cm}{}
\xymatrix{
B \sm_AI^{d+1} \ar[d]^{\wr} \ar[r] & B \sm_AI^d \ar[d]^{\wr} \ar[r] & B \sm_A z(D^A_d(T(I))) \ar[d]^{\wr} \\
J^{d+1} \ar[r] & J^d  \ar[r] & z(D^B_d(T(J)))}
\end{equation*}
commutes in $B\Alg$.
\end{thm}

Finally, our constructions yield connectivity estimates.

\begin{prop}\cite[Prop.2.22]{kuhn-pereira} Let $R$ be a connective commutative $S$--algebra. If $I\in R\Alg$ is $(c-1)$-connected then $I^d$ is $(cd-1)$ connected.
\end{prop}

\subsection{The homotopy category of finitary functors} \label{ho cat section}

With $\C$ one of the categories $S\Mod$ or $S\Mod_{\geq 0}$, and $\D$ one of the categories $R\Mod$ or $R\Alg$, let $\Fun(\C,\D)$ be the category of finitary functors $F: \C \ra \D$.  (We recall that $F$ is finitary if it commutes with directed homotopy colimits \cite[Def.5.10]{goodwillie3}. All functors in this paper have this property.)

Then \cite[Thm.2.14]{biedrondigsII} shows that $\Fun(\C, \D)$ has a  model structure with weak equivalences and fibrations defined objectwise.

For clarity's sake and readability, we will occasionally abuse notation and terminology as we work in these categories.  As examples, we may write `a natural transformation $f(X) \in [F(X),G(X)]_{\D}$' when we mean $(f: F \ra G) \in ho\Fun(\C,\D)$, we may say `$f(X)$ is naturally null' when we mean that $(f: F \ra G) \in ho\Fun(\C,\D)$ is the null element, and we may say, for fixed $F,G$, that `$[F(X),G(X)]_{\D}$ is zero' when we mean that all $f(X) \in [F(X),G(X)]_{\D}$ are naturally null.  We trust the meaning will be clear.

\subsection{The model for $\Sinfty \Oinfty X$}

In this subsection we are working in our homotopy categories of functors. In particular, our weak equivalences are zig-zags of natural transformations that are objectwise weak equivalences.

\begin{thm}  \label{TQ prop} There is a simplicial functor $I: S\Mod \ra S\Alg$, a natural weak equivalence of $S$--modules,
$$ I(X) \simeq \Sinfty \Oinfty X$$
and, for connective $X$, a natural weak equivalence
$$ T(I(X)) \simeq X$$
such that the diagram
\begin{equation*}
\SelectTips{cm}{}
\xymatrix{
I(X) \ar@{-}[d]^{\wr} \ar[r] & T(I(X)) \ar@{-}[d]^{\wr}  \\
\Sinfty \Oinfty X \ar[r]^-{\epsilon} & X }
\end{equation*}
commutes up to homotopy.
\end{thm}

Here the top horizontal map is the canonical map $I \ra T(I)$ specialized to $I = I(X)$.

\begin{rem}  The restriction to connective $X$ here is due to the observation that the connective cover map\footnote{See \secref{lifting lemmas proof sec} for a discussion of the connective cover functor.} $X\langle 0 \rangle \ra X$ induces a weak equivalence after applying $\Oinfty$.  Thus $I(X\langle 0 \rangle) \ra I(X)$ will be a weak equivalence, and it follows that there is a natural weak equivalence $T(I(X)) \simeq X\langle 0 \rangle$ for all $X \in S\Mod$.
\end{rem}

This is a variant of results in \cite{ekmm, basterra-mandell, kuhn-TAQtowers}, with the only difference being that we are working in different a category of spectra, not the category of \cite{ekmm}.

Assuming the first statement for the moment -- the existence of a simplicial functor $I$ together with a natural weak equivalence $I(X) \simeq \Sinfty \Oinfty X$ -- we show that the rest of the theorem follows from this.

Firstly, the natural maps $I \ra T(I)$ induce a weak equivalence
$$ \hocolim_n \Sigma^{-n}(S^n \otimes I) \xra{\sim} T(I).$$
This is a well known consequence of the `linearity' of Andr\'e--Quillen homology \cite{basterra-mandell}, and in our setting can also be seen as being induced by a levelwise weak equivalence of $\Com$--bimodules
$$ \hocolim_n \Sigma^{-n} (S^n)^{\sm} \xra{\sim} S(1).$$

\lemref{tensor lem} and \exref{bar ex} imply that $S^n \otimes I$ is equivalent to an iterated bar construction. It follows that the natural map
$$ \epsilon_{S^n}: S^n \otimes I(X) \ra I(\Sigma^n X)$$
is an equivalence for connective $S$--modules $X$: the right side is a model for the suspension spectrum of $X_n = \Oinfty \Sigma^n X$, and so is the left when $X$ is connective.

Finally, one has that the adjunction maps $\epsilon: \Sinfty \Oinfty X \ra X$ induce a weak equivalence $\displaystyle \hocolim_n \Sigma^{-n} \Sinfty X_n \xra{\sim} X$.

Putting these equivalences together gives equivalences, for connective $X$,
\begin{equation*}
\begin{split}
T(I(X)) \xla{\sim} \hocolim_n \Sigma^{-n}(S^n \otimes I(X)) &
\xra{\sim} \hocolim_n \Sigma^{-n}(I(\Sigma^n X)) \\
  & \xra{\sim} \hocolim_n \Sigma^{-n} \Sinfty X_n \xra{\sim} X
\end{split}
\end{equation*}
compatible with the equivalence $I(X) \simeq \Sinfty \Oinfty X$.

It remains to define the simplicial functor $I: S\Mod \ra S\Alg$.  If we were working with the spectra of \cite{ekmm}, the analogous result is essentially in the literature, with the ideas going back to the beginning of infinite loopspace theory.  Here we follow a parallel path using simplical sets, with the use of the functor $B(\Com, \C_{\infty}^{red}, \text{\hspace{.1in}})$, appearing below, analogous to \cite[p.241, proof of Prop.II.4.5]{ekmm}.

Let $sing$ be the singular simplices functor from topological spaces to the category of simplicial sets, $sSet$.  This has left adjoint $|\text{\hspace{.1in}}|$, the geometric realization functor.

We let $\C_{n}$, $n=1,2, \dots, \infty$, denote the classic little $n$-cubes operad in topological spaces \cite{maygils}. Let $s\C_{n}$ denote $sing$ applied to the operad $\C_{n}$: this is an operad in $sSet$. Then let $s\C_{\infty}\text{-}sSet_*$ be the category of $s\C_{\infty}$ algebras in $sSet_*$.

The functor $\Sinfty_+$ is, as is usual, defined by letting $\Sinfty_+ Z$ be the suspension spectrum functor applied to a simplicial set $Z$ with a disjoint basepoint added.  Applying $\Sinfty_+$ levelwise to $s\C_{\infty}$, we obtain $\Sinfty_+ s\C_{\infty}$, an operad in $S\Mod$.  We let $s\C_{\infty}\Alg$ denote the category of augmented $\Sinfty_+ s\C_{\infty}$ algebras in $S\Mod$.

Finally, let $\Sinfty_+ s\C_{\infty}^{red}$ denote the associated reduced operad: this agrees with $\Sinfty_+ s\C_{\infty}$, except in level 0, where it is $*$, rather than $S$.  Let $s\C_{\infty}^{red}\Alg$ denote the category of $\Sinfty_+ s\C_{\infty}^{red}$ algebras in $S\Mod$.

Our functor $I$ will be a composite of simplicial functors:
\begin{multline*}
S\Mod
\xra{{\widetilde \Omega}^{\infty} \circ (\text{\hspace{.1in}})^f}
     s\C_{\infty}\text{-}sSet_*
     \xra{\Sinfty_+ \circ (\text{\hspace{.1in}})^c} s\C_{\infty}\Alg \\ \xra{\text{Fib(\hspace{.1in})} \circ (\text{\hspace{.1in}})^f} s\C_{\infty}^{red}\Alg
     \xra{B(\Com, s\C_{\infty}^{red}, \text{\hspace{.1in}})} S\Alg.
\end{multline*}

The functors $(\text{\hspace{.1in}})^f$ and $(\text{\hspace{.1in}})^c$ are simplicial functorial fibrant and cofibrant replacement functors.  These exist by \cite[Proposition 6.3]{rss}.

The functor ${\widetilde \Omega}^{\infty}$ is given by
$$\displaystyle {\widetilde \Omega}^{\infty}(X) = \colim_n \Omega^n sing |X_n| = \colim_n sing \ \Omega^n_{Top}|X_n|.$$  Here $\Omega^n_{Top}$ is the $n$--fold loop functor on based topological spaces.  In the standard way, $\Omega^n_{Top}|X_n|$ is equipped with a $\C_{n}$ operad action \cite{maygils}, and it follows that $\Omega^n sing |X_n|$ is an $s\C_n$--algebra in $sSet_*$. (See also \cite[\S 5.2.6]{lurie} for a more detailed discussion.)

If $A$ is an object in  $s\C_{\infty}\Alg$, $\text{Fib}(A)$ is the fiber of the augmentation $A \ra S$.

The functor $B(\Com, s\C_{\infty}^{red}, \text{\hspace{.1in}})$ sends $J \in s\C_{\infty}^{red}\Alg$ to $B(\Com, \Sinfty_+ s\C_{\infty}^{red}, J)$.  Here the map of operads $\Sinfty_+ s\C_{\infty}^{red} \ra \Com$ makes $\Com$ into a left $\Com$--module and a right $\Sinfty_+ s\C_{\infty}^{red}$--module.

Let $J(\text{\hspace{.1in}})$ be the composite of all the functors in the composite except for the last one. By construction, there is a natural weak equivalence of $S$--modules $J(X) \simeq \Sinfty \Oinfty X$.  As $\Sinfty_+ s\C_{\infty}^{red} \ra \Com$ is a levelwise equivalence of $\C_{\infty}$--bimodules, \cite[Theorem 2.11(a)]{kuhn-pereira} applies to show that there are weak equivalences
$$ J(X) \xla{\sim} B(\Sinfty_+ s\C_{\infty}^{red}, \Sinfty_+ s\C_{\infty}^{red}, J(X)) \xra{\sim} B(\Com, \Sinfty_+ s\C_{\infty}^{red}, J(X)) = I(X).$$

\section{The proof of \thmref{lifting thm}}\label{lifting thm proof sec}

\subsection{The proof, modulo two lifting lemmas}

We now have the ingredients for the proof of \thmref{lifting thm}.  In the following discussion, assume that $R/S$ is 0--connected and that $X$ is connective.

We will see that \thmref{TQ prop}, together with properties (a) and (b) of \thmref{I properties thm}, lead to our first lifting lemma.

\begin{lem} \label{lifting lemma 1}  There is a natural lifting in $ho\Fun(S\Mod_{\geq 0}, R\Alg)$:
\begin{equation*}
\SelectTips{cm}{}
\xymatrix{
& R \sm I^2(X) \ar[d]  \\
R \sm I(X(1)) \ar@{-->}[ur] \ar[r]& R \sm I(X). }
\end{equation*}
\end{lem}
The proof is postponed to \secref{lifting lemmas proof sec}.

When $p>2$, one lifts further using a slightly delicate lifting lemma which we will prove in \secref{lifting lemmas proof sec} using Goodwillie homotopy calculus.

\begin{lem} \label{lifting lemma 2} Localized away from $(p-1)!$,  any natural transformation $f: R \sm I(X(1)) \ra R \sm I^2(X)$ in $ho\Fun(S\Mod_{\geq 0}, S\Alg)$ lifts uniquely:
\begin{equation*}
\SelectTips{cm}{}
\xymatrix{
 & R \sm I^p(X) \ar[d]  \\
R \sm I(X(1)) \ar[r]^f \ar@{-->}[ur]^{\tilde f}& R \sm I^2(X). }
\end{equation*}
\end{lem}

Assuming these lemmas, one can prove our main theorem.
\begin{proof}[Proof of \thmref{lifting thm}]  Note that, given $I \in S\Alg$ and $J \in R\Alg$, $S$--algebra maps $I \ra J$ correspond to $R$--algebra maps $R \sm I \ra J$, so \thmref{lifting thm} can be viewed as asserting that there are compatible natural transformations in $ho\Fun(S\Mod_{\geq 0}, R\Alg)$,
$$ \tilde h_s: R \sm I(X(s)) \ra R \sm I^{p^s}(X) = (R \sm I(X))^{p^s},$$
with $\tilde h_0$ being the identity.

The two lifting lemmas combine to define $\tilde h_1 \in ho\Fun(S\Mod_{\geq 0}, S\Alg)$ making the following commute:
\begin{equation*}
\SelectTips{cm}{}
\xymatrix{
 & (R \sm I(X))^p \ar[d]  \\
R \sm I(X(1)) \ar[r] \ar@{-->}[ur]^{\tilde h_1}& R \sm I(X). }
\end{equation*}

Now assume by induction that $\tilde h_{s-1}$ has been defined.  Then let $\tilde h_s$ be the composite
$$ R \sm I(X(s)) \xra{\tilde h_1} (R \sm I(X(s-1)))^p \xra{(\tilde h_{s-1})^p} ((R \sm I(X))^{p^{s-1}})^p \xra{\mu} (R \sm I(X))^{p^s}.$$

The commutative diagram
\begin{equation*}
\SelectTips{cm}{}
\xymatrix{
R \sm I(X(s-1)) \ar[ddd]^{\tilde h_{s-1}} &   & R \sm I(X(s)) \ar[ddd]^{\tilde h_s} \ar[ll] \ar[dl]_{\tilde h_1} \\
&  (R \sm I(X(s-1)))^p \ar[ul] \ar[d]^{(\tilde h_{s-1})^p} &   \\
&  ((R \sm I(X))^{p^{s-1}})^p \ar[dl] \ar[dr]^{\mu} &   \\
(R \sm I(X))^{p^{s-1}} &   & (R \sm I(X))^{p^s} \ar[ll] }
\end{equation*}
shows $\tilde h_s$ has the needed compatibility with $\tilde h_{s-1}$.  Note that the commutation of the bottom triangle is an example of the compatibility stated in  \thmref{I properties thm}(c).

\end{proof}

\subsection{Proof of the lifting lemmas} \label{lifting lemmas proof sec}

Here we are using our terminology and notational conventions as described in \secref{ho cat section}.

\begin{proof}[Proof of \lemref{lifting lemma 1}]

There is a natural homotopy fibration sequence in $R\Mod$:
$$ R\sm I^2(X) \ra R\sm I(X) \ra z(R \sm X).$$
Here we have used properties (b) and (c) of \thmref{I properties thm}, and have identified $T(I(X))$ with $X$ as in \thmref{TQ prop}.  Also note that $X(1) \simeq S(1) \sm X$, where $S(1)$ is the homotopy fiber of $S \ra R$.

A lifting in $ho\Fun(S\Mod_{\geq 0}, R\Alg)$,
\begin{equation*}
\SelectTips{cm}{}
\xymatrix{
& R \sm I^2(X) \ar[d]  \\
R \sm I(X(1)) \ar@{-->}[ur] \ar[r]^{i(X)} & R \sm I(X) \ar[d]^{p(X)} \\
& z(R \sm X), }
\end{equation*}
will exist if we show that there is a natural null homotopy in $R\Alg$ of
$$p(X) \circ i(X): R\sm I(S(1) \sm X) \ra z(R \sm X).$$
By property (a) of \thmref{I properties thm}, we can equivalently, show that
$$T(R\sm I(S(1) \sm X)) \ra R \sm X$$ is naturally null in $R\Mod$.  Using \thmref{TQ prop} again, together with the change of rings theorem \thmref{change of rings thm}, it follows that we just need to show that
$$S(1) \sm X \ra R \sm X$$
is naturally null in $S\Mod$.  But this is tautologically true, as smashing a fixed null homotopy of $S(1) \ra S \ra R$ in $S\Mod$ with $X$ does the job.
\end{proof}

\begin{proof}[Proof of \lemref{lifting lemma 2}]
Reasoning as in the proof of the previous lemma, the obstructions to finding a natural lifting $\tilde f$ in $ho\Fun(S\Mod_{\geq 0}, R\Alg)$ in the diagram
\begin{equation*}
\SelectTips{cm}{}
\xymatrix{
 & R \sm I^p(X) \ar[d]  &\\
  & \vdots \ar[d] & \\
   & R \sm I^3(X) \ar[d]  \ar[r]& z(R \sm D_3(X)) \\
R \sm I(X(1)) \ar[r]^f \ar@{-->}[uuur]^{\tilde f}& R \sm I^2(X) \ar[r] & z(R \sm D_2(X)) }
\end{equation*}
will be natural elements $o_k(X)$, for $k=2, \dots, p-1$, in
\begin{equation*}
\begin{split}
[R \sm S(1) \sm X, z(R \sm D_kX)]_{R\Alg} & \simeq [R \sm S(1) \sm X, R \sm D_kX]_{R\Mod} \\
  & \simeq [ S(1) \sm X, R \sm D_kX]_{S\Mod}.
\end{split}
\end{equation*}

The existence of the lifting as in the lemma will follow if we show that $o_k(X) \in [ S(1) \sm X, R \sm D_kX]_{S\Mod}$ must be zero, after localizing away from $k!$.  Indeed, we will show that the whole homotopy group of natural transformations $[S(1) \sm X, R[\frac{1}{k!}] \sm D_kX]_{S\Mod}$ is zero. Similarly, we will show that $[\Sigma S(1) \sm X, R[\frac{1}{k!}] \sm D_kX]_{S\Mod} = 0$, proving the uniqueness of the lifting.

As a first step towards showing this, we note that transfer arguments show that $X^{\sm k} \ra D_kX$ is naturally split epic, localized away from $k!$.  Thus we have epimorphisms
$$ [S(1) \sm X, R[\frac{1}{k!}] \sm X^{\sm k}]_{S\Mod} \ra [S(1) \sm X, R[\frac{1}{k!}] \sm D_kX]_{S\Mod},$$
and
$$ [\Sigma S(1) \sm X, R[\frac{1}{k!}] \sm X^{\sm k}]_{S\Mod} \ra [\Sigma S(1) \sm X, R[\frac{1}{k!}] \sm D_kX]_{S\Mod},$$
so it suffices to show that the domains of these two homomorphisms are zero for $k\geq 2$.

Readers who are familiar with Goodwillie calculus may possibly think the proof is done, as it is a standard fact that, fixing $S$--modules $M$ and $N$ and $k \geq 2$, there are no essential natural transformations of the form $M \sm X \ra N \sm X^{\sm k}$ with $X \in S\Mod$.  However, in our above analysis, we were assuming $X \in S\Mod_{\geq 0}$, the category of {\em connective} $S$--modules, so we need a more careful argument.

We use the notational convention\footnote{We apologize for the clash of notation, which is not consistent with our previous use of $BP\langle n \rangle$, but trust that the reader will follow.} that $X\langle c \rangle$ denotes the $(c-1)$--connected cover of $X$.  We recall that $X\langle c \rangle$ can be constructed as a colocalization of $S\Mod$, so there is a functor $S\Mod \ra S\Mod$, $X \mapsto X\langle c \rangle$, equipped with a natural transformation $X\langle c \rangle \ra X$.

We observe that the homotopy group $[M \sm X, N \sm X^{\sm k}]_{S\Mod}$, viewed as a set of morphisms in $ho \Fun(S\Mod_{\geq 0}, S\Mod)$, will equal the homotopy group $[M \sm X\langle 0 \rangle, N \sm X\langle 0 \rangle^{\sm k}]_{S\Mod}$, viewed as a set of morphisms in $ho \Fun(S\Mod, S\Mod)$.

Thus the proof of \lemref{lifting lemma 2} will follow from the following lemma.

\begin{lem} \label{delicate lemma}  Let $M, N$ be fixed $S$--modules.  For $k\geq 2$, all natural transformations of the form
$$ \Theta(X): M \sm X\langle 0\rangle \ra N \sm X\langle 0\rangle^{\sm k}$$
are naturally null.
\end{lem}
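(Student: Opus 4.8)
The plan is to exploit the interplay between the connective cover functor and Goodwillie calculus in the source variable. The key observation is that $X \mapsto M \sm X\langle 0\rangle$ is a functor whose Goodwillie derivatives we understand: since $X\langle 0\rangle \to X$ is an equivalence on $0$--connected spectra and Goodwillie calculus of functors of spectra only sees the ``infinitesimal'' behavior near the basepoint (where $X\langle 0 \rangle$ looks nothing like $X$), the source functor $\Theta$ is being evaluated on a functor that is highly non-analytic but is linear in a suitable sense. More precisely, I would first reduce to understanding natural transformations out of the $0$--connected cover functor by a cross-effects / derivative argument. The target functor $X \mapsto N \sm X\langle 0\rangle^{\sm k}$ is $k$--homogeneous-ish up to connective cover corrections: its $(k-1)$st Goodwillie approximation vanishes, so any natural transformation from a functor agreeing to first order with $M \sm X\langle 0 \rangle$ (which is ``degree $1$'') into it must factor through the fiber of $P_{k-1} \to P_{k-2} \to \cdots$, and iterating, must be detected by the $k$th derivative.

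The concrete steps I would carry out are as follows. First, observe that $X \mapsto X\langle 0\rangle$ commutes with filtered homotopy colimits and finite homotopy limits in an appropriate sense, and in particular that both source and target of $\Theta$ are finitary functors (this is where continuity/enrichment is used). Second, use the Yoneda-type / Goodwillie-derivative machinery: a continuous natural transformation $\Theta(X)\colon M \sm X\langle 0\rangle \to N \sm X\langle 0\rangle^{\sm k}$ is an element of a mapping spectrum of natural transformations, and by analyzing the Taylor tower of the target functor $G(X) = N \sm X\langle 0\rangle^{\sm k}$ we see $P_1 G = *$ because $X\langle 0\rangle$ has trivial first derivative at $*$ — here one uses that the derivative of $X \mapsto X\langle 0 \rangle$ at the zero spectrum is null, since $X\langle 0\rangle$ is obtained from $X$ by killing the bottom homotopy and the linearization process only sees suspensions of $X$ restricted to low dimensions, all of which get truncated away. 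Third, conclude that any natural transformation from the ``linear in $X\langle 0\rangle$'' functor $F(X) = M \sm X\langle 0\rangle$ into $G$ lands in the limit of the Taylor tower of $G$, whose bottom stages vanish, forcing $\Theta$ to be null after passing to the relevant connected cover of the natural-transformation spectrum; the degree bookkeeping ($k \geq 2$ strictly) is exactly what makes the relevant stage vanish.

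The main obstacle — and the step requiring genuine care — is the second one: making precise the claim that the functor $X \mapsto X\langle 0\rangle$ has \emph{vanishing first Goodwillie derivative} (equivalently, $P_1(X \mapsto X\langle 0\rangle) \simeq *$) and controlling how this interacts with smashing in the fixed spectra $M$, $N$ and with the $k$th smash power. One has to be honest about basepoints and about whether ``natural transformation'' means in the $\infty$-categorical / homotopy-coherent sense or the strict pointset sense, and to verify that continuity of $\Theta$ lets one pass between the two. The cleanest route is probably to filter $X$ by its Postnikov-type tower and induct, or alternatively to directly compute $\operatorname{Map}(F, G)$ as a homotopy limit over the Taylor tower of $G$ using that $G$ is $k$-reduced (its $P_{k-1}$ vanishes because each smash factor $X\langle 0\rangle$ contributes at least ``degree one'' but with a derivative-killing truncation, so the composite has filtration $\geq k$ in a way that outruns the ``degree one'' source). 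Once that vanishing is established, the conclusion that $\Theta(X)$ is naturally null is immediate.
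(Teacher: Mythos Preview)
Your central claim --- that $X \mapsto X\langle 0\rangle$ has vanishing first Goodwillie derivative --- is false, and this breaks the argument. Since $X\langle 0\rangle \to X$ is an equivalence on highly connected spectra, Goodwillie's construction gives $P_n(X\langle 0\rangle) \simeq X$ for every $n\geq 1$; in particular the first derivative is the identity, not zero. Your intuition that ``the linearization process only sees suspensions of $X$ restricted to low dimensions, all of which get truncated away'' is exactly backwards: suspending makes $X$ more connected, so the connective cover does \emph{less}, and in the colimit defining $P_1$ it does nothing at all. Consequently the source functor $F(X)=M\sm X\langle 0\rangle$ has $P_nF(X)=M\sm X$ for all $n$, and your filtration-by-Taylor-tower argument collapses: on connective $X$ both $F\to P_kF$ and $G\to P_kG$ are equivalences, so passing to $P_k$ simply returns $\Theta(X)$ and proves nothing.

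What is missing is a genuine reduction step before any calculus is invoked. The paper's proof uses the enriched naturality of $\Theta$ in a concrete way: comparing $\Theta(X)$ with $\Sigma\Theta(\Sigma^{-1}X)$ via the suspension square, one sees that the right-hand comparison map is (after unwinding) the identity smashed with the diagonal $S^1\to S^k$, which is null for $k\geq 2$. This forces the restriction of $\Theta(X)$ to $M\sm X\langle 1\rangle$ to be null, so $\Theta(X)$ factors through $M\sm H\pi_0(X)$. Only now does Goodwillie calculus bite: the functor $X\mapsto M\sm H\pi_0(X)$ really does have trivial Taylor tower (it vanishes on highly connected spectra), while $G(X\langle 0\rangle)\to (P_kG)(X\langle 0\rangle)$ is an equivalence, and the argument concludes. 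Your proposal gestures at Postnikov filtrations and induction as an alternative, but without something playing the role of this diagonal-nullity step there is no mechanism that separates $M\sm X\langle 0\rangle$ from its (nontrivial) Taylor approximations.
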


The proof of this lemma is a bit delicate, and has two rather distinct steps.  We emphasize again we are working in $ho \Fun(S\Mod,S\Mod)$. We also assume some familiarity with basic aspects of Goodwillie homotopy calculus, as in \cite{goodwillie3, kuhn kinosaki}.

The first step is a reduction to \lemref{delicate lemma 2}, below.

Let $F(X) =  M \sm X\langle 0\rangle$ and $G(X) = N \sm X\langle 0\rangle^{\sm k}$.  These are simplicial functors, and we let $\Delta: \Sigma F(X) \ra F(\Sigma X)$ and  $\Delta: \Sigma G(X) \ra G(\Sigma X)$ be the natural maps, as in \secref{simplicial functor sec}.

Since $(\Sigma^{-1}X)\langle 0 \rangle = \Sigma^{-1}(X\langle 1 \rangle)$,  the natural commutative square
\begin{equation*}
\xymatrix{
\Sigma F(\Sigma^{-1}X) \ar[d]^{\Delta} \ar[rr]^-{\Theta(\Sigma^{-1}X)} && \Sigma G(\Sigma^{-1}X) \ar[d]^{\Delta}  \\
F(X) \ar[rr]^{\Theta(X)} && G(X) }
\end{equation*}
rewrites as
\begin{equation*}
\xymatrix{
M \sm X\langle 1\rangle \ar[d] \ar[rr]^-{\Theta(\Sigma^{-1}X)} && \Sigma N \sm (\Sigma^{-1} X\langle 1\rangle)^{\sm k} \ar[d]  \\
M \sm X\langle 0\rangle \ar[rr]^{\Theta(X)} && N \sm X\langle 0\rangle^{\sm k}. }
\end{equation*}
The right vertical map factors as the composite
$$ \Sigma N \sm (\Sigma^{-1} X\langle 1\rangle)^{\sm k} \ra \Sigma N \sm (\Sigma^{-1} X\langle 0\rangle)^{\sm k} \xra{\Delta} N \sm X\langle 0\rangle^{\sm k},$$
which is naturally null, as the second map in this composite is the identity map for $N \sm (\Sigma^{-1} X\langle 0\rangle)^{\sm k}$ smashed with the null map $S^1 \xra{\Delta} S^k$. (Note that $\Delta$ is null since $k\geq 2$.)

We conclude that the composite
$$ M \sm X\langle 1\rangle \ra M \sm X\langle 0 \rangle \xra{\Theta(X)} N \sm X\langle 0\rangle^{\sm k}$$
is naturally null, and thus $\Theta(X)$ naturally factors through a natural transformation of the form
$$ M \sm H\pi_0(X) \xra{\Psi(X)} N \sm X\langle 0\rangle^{\sm k}.$$

\lemref{delicate lemma} will thus follow from the next lemma.

\begin{lem} \label{delicate lemma 2}  Let $M, N$ be fixed $S$--modules.  For $k\geq 1$, all natural transformations of the form
$$ \Psi(X): M \sm H\pi_0(X) \ra N \sm X\langle 0\rangle^{\sm k}$$
are naturally null.
\end{lem}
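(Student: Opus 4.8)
The idea is that the source $F(X) := M \sm H\pi_0(X)$ is an \emph{additive} functor of $X$, while the target $G(X) := N \sm X\langle 0\rangle^{\sm k}$ genuinely involves a $k$-fold smash power; when we test $\Psi$ on the binary wedge $X\vee X$, an additive functor cannot detect the ``mixed'' summands into which the diagonal $X \to X\vee X$ distributes mass, and this will force $\Psi$ to vanish. Concretely, writing $Y = X\langle 0\rangle$, I would first record that $\pi_0(-)$, $H(-)$ and $M\sm(-)$ each carry finite wedges to finite direct sums, so there is a natural equivalence $F(X\vee X) \simeq F(X)\vee F(X)$ under which $F$ of the two inclusions $\iota_1,\iota_2 \colon X \to X\vee X$ are the two summand inclusions and $F$ of the diagonal $\delta\colon X \to X\vee X$ is the diagonal of $F(X)$ (equivalently the sum of the two summand inclusions, using that $F$ is additive on hom-sets). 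On the other side, connective cover commutes with wedges, so $(X\vee X)\langle 0\rangle \simeq Y\vee Y$ and hence
$$G(X\vee X) \;\simeq\; N \sm (Y\vee Y)^{\sm k} \;\simeq\; \bigvee_{f\colon \{1,\dots,k\}\to\{1,2\}} N \sm Y^{\sm k},$$
a wedge of $2^k$ copies of $G(X)$ indexed by functions $f$. Under this splitting I would check the two facts I need: $G(\iota_1)$ and $G(\iota_2)$ are the inclusions of the two constant summands $f\equiv 1$ and $f\equiv 2$ — because $(\iota_j)^{\sm k}$ factors through the corresponding single copy of $Y^{\sm k}$ — whereas $G(\delta) = N \sm (\delta_Y)^{\sm k}$ is the ``total diagonal,'' i.e. $\sum_f \iota_f$ over all $f$ of the summand inclusions, since $\delta_Y = \iota_1^Y + \iota_2^Y$ and the smash product is additive (bilinear) in each variable. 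Note that this last computation uses bilinearity of $\sm$, not additivity of $G$ as a functor (which fails, and is the point).

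With these in hand the argument is short. A map out of a binary wedge is determined by its restrictions to the two summands, so naturality of $\Psi$ at $\iota_1$ and $\iota_2$ shows those restrictions of $\Psi(X\vee X)$ are $G(\iota_1)\circ\Psi(X)$ and $G(\iota_2)\circ\Psi(X)$. Precomposing $\Psi(X\vee X)$ with $F(\delta)$ therefore yields $G(\iota_1)\circ\Psi(X) + G(\iota_2)\circ\Psi(X)$, while naturality at $\delta$ identifies the same composite with $G(\delta)\circ\Psi(X)$. Unpacking via the computations above gives the identity
$$\iota_{f\equiv 1}\circ\Psi(X) \;+\; \iota_{f\equiv 2}\circ\Psi(X) \;=\; \sum_{f}\iota_f\circ\Psi(X)$$
in $[F(X), G(X\vee X)]$; cancelling the two constant terms and composing with the projection onto any summand indexed by a non-constant $f$ forces $\Psi(X) = 0$ in $[F(X), G(X)]$. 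Such a non-constant $f$ exists precisely when $k\ge 2$, which is exactly the hypothesis (and is exactly why the statement is false for $k=1$).

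The only remaining point is to upgrade ``$\Psi(X)$ is null for every $X$'' to ``$\Psi$ is naturally null.'' Since the whole argument proceeds through naturality squares at the fixed morphisms $\iota_1,\iota_2,\delta$ together with manipulations in homotopy categories, it in fact shows that $\Psi$ represents the zero class in the (abelian) group of homotopy classes of natural transformations $F\to G$, which is what the obstruction-theoretic application in \propref{lifting prop} requires (there $\Psi$ arises as the natural obstruction class $o_k(X)$). I expect the one genuine place to be careful is the bookkeeping for $G(X\vee X)$: verifying that $G$ of $\iota_1$, $\iota_2$ and $\delta$ really are the claimed summand inclusions and total diagonal, which rests on connective cover commuting with finite wedges and on bilinearity of the smash product. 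The conceptual content — an additive functor is blind to the cross terms that the diagonal feeds into a $k$-fold smash power — is what pins the result to $k\ge 2$.
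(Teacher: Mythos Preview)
Your argument is correct and takes a genuinely different route from the paper's.  The paper proves \lemref{delicate lemma 2} via Goodwillie calculus: it observes that $H(X)=M\sm H\pi_0(X)$ agrees with the zero functor on highly connected spectra, so $p_kH\simeq *$, while $G(X)=N\sm X\langle 0\rangle^{\sm k}$ agrees with $N\sm X^{\sm k}$ there, so $G\to p_kG$ is an equivalence on connective inputs; then $\Psi$ factors naturally through $p_k\Psi$, which has contractible source.  Your proof instead exploits the elementary tension between an additive source and a $k$--multiplicative target by testing on $X\vee X$: naturality at $\iota_1,\iota_2,\delta$ together with bilinearity of $\sm$ forces $\Psi$ to land simultaneously in the two constant summands and in all $2^k$ summands, and projecting to any nonconstant summand kills $\Psi$.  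This is essentially a cross--effects argument carried out by hand.

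What each approach buys: the paper's proof sits naturally inside the ambient Goodwillie--calculus framework already in play, and its logic extends immediately to any natural transformation from a functor of excisive degree $<k$ to one whose $k$th Taylor approximation is an equivalence on connective inputs.  Your proof is more self--contained --- it needs only that $\pi_0$, $H$, $M\sm(-)$, and connective cover preserve finite wedges and that $\sm$ is bilinear --- and it makes the role of the hypothesis $k\geq 2$ completely transparent (existence of a nonconstant $f$).  Your final paragraph is also on point: because every step is a manipulation of natural transformations (whiskerings by $\iota_1,\iota_2,\delta$ and the summand projections $p_f$, all natural in $X$), the identity $p_{f_0}\circ G(\delta)=\mathrm{id}_G$ and $p_{f_0}\circ G(\iota_j)=0$ for nonconstant $f_0$ combine with naturality of $\Psi$ to give $\Psi=0$ in $\pi_0\operatorname{Nat}(F,G)$, not merely objectwise --- which is exactly what the obstruction--theoretic application in \secref{lifting prop proof sec} requires.
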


To prove this, we let $H(X) = M \sm H\pi_0(X)$ and, as before, let $G(X) = N \sm X\langle 0\rangle^{\sm k}$, so that our natural transformation has the form
$$ \Psi(X): H(X) \ra G(X).$$

We consider the $k$th Goodwillie-Taylor approximations to $G$ and $H$.  As observed by Goodwillie \cite[Remark 1.1]{goodwillie3}, the explicit construction of the Taylor towers shows that, if a natural transformation $A(X) \ra B(X)$ of finitary functors is an equivalence on highly connected $S$--modules $X$, then $p_kA(X) \ra p_kB^{\prime}(X)$ will be an equivalence for all $X$.

Applying this observation to $N \sm X\langle 0\rangle^{\sm k} \ra N \sm X^{\sm k}$, we deduce that $(p_kG)(X) \simeq N \sm X^{\sm k}$.

Applying the observation to $M \sm H\pi_0(X) \ra *$, we see that $(p_kH)(X) \simeq *$.

Now $\Psi(X)$ fits into the diagram
\begin{equation*}
\xymatrix{
H(X) \ar[d]^{\Psi(X)} &&  H(X\langle 0\rangle) \ar[ll]_-{\sim} \ar[d]^{\Psi(X\langle 0 \rangle)} \ar[rr] && (p_kH)(X\langle 0\rangle)\simeq * \ar[d]^{(p_k \Psi)(X\langle 0 \rangle)}  \\
G(X) && G(X\langle 0\rangle) \ar[ll]_-{\sim} \ar[rr]^-{\sim} && (p_kG)(X\langle 0\rangle), }
\end{equation*}
and we conclude that $\Psi(X)$ is null, proving the \lemref{delicate lemma 2}.
\end{proof}

\section{The Connectivity Theorem for mod p homology} \label{mod p applications sec}

Let $\{E_r^{*,*}(X)\}$ denote the classic mod $p$ Adams spectral sequence for a spectrum $X$.  A permanent cycle in $E_2^{s,t}(X) = \Ext_{A}^{s,t}(H^*(X;\Z/p), \Z/p)$ represents an element $\alpha \in \pi_{t-s}(X)$ having Adams filtration $s$.

The Connectivity Theorem can be restated as saying that, if $X$ is $(c-1)$--connected, then $h: \pi_*(X) \ra H_*(\Oinfty X;\Z/p)$ satisfies:
\begin{equation} \label{log condition} h(\alpha) = 0 \text{ unless } t-s\geq cp^s.
\end{equation}
Thus the kernel of $h$ includes everything represented by $(s,t)$ with $s>\log_p(t-s) - \log_p(c)$, i.e. everything above a logarithmic curve in the usual graphic depiction of the Adams spectral sequence.

\subsection{Proof of the Finiteness Theorem} \label{finiteness section} We are assuming that $X$ is a bounded below spectrum of finite type, and that  $H^*(X;\Z/p)$ is a finitely presented module over the mod $p$ Steenrod algebra $A$.  We wish to show that $h: \pi_{*}(X) \ra H_*(\Oinfty X;\Z/p)$ has finite dimensional image in positive dimensions.

As a first reduction, we show that we can assume that $X$ is 0--connected.  Note that the basepoint component of $\Oinfty X$ equals $\Oinfty(X\langle 1 \rangle)$, and thus the positive dimensional Hurewicz image for $X$ equals the image of
$h: \pi_{*}(X\langle 1 \rangle) \ra H_*(\Oinfty(X\langle 1 \rangle);\Z/p)$.  The next lemma now shows that $X\langle 1 \rangle$ also will have finitely presented mod p cohomology.

\begin{lem}   Suppose that $X$ is a bounded below spectrum of finite type.  If $H^*(X;\Z/p)$ is a finitely presented $A$--module, so is $H^*(X\langle c\rangle;\Z/p)$ for any $c$.
\end{lem}
\begin{proof} We prove this by induction on $c$.  The statement is obviously true for any $c$ below the connectivity of $X$.   Thus we assume $H^*(X\langle c\rangle;\Z/p)$ is finitely presented, and need to show that $H^*(X\langle c+1 \rangle;\Z/p)$ is finitely presented.

J. Cohen \cite{joel cohen} observed that $A$ is a coherent graded ring, and that this has the following consequence: if $X \ra Y \ra Z$ is a fibration sequence of spectra, and any two of these have finitely presented mod $p$ cohomology, then so does the third.

We apply this to the fibration $X\langle c+1\rangle \ra X\langle c\rangle \ra H\pi_c(X)$ to deduce that $H^*(X\langle c+1 \rangle;\Z/p)$ is finitely presented.  Here we use that  $\pi_c(X)$ is a finitely generated abelian group since $X$ is of finite type, and that, if $B$ is a finitely generated abelian group, then $H^*(HB;\Z/p)$ is a finitely presented $A$--module. (This last fact follows from the standard calculations: $H^*(H\Z/p;\Z/p)=A$, $H^*(H\Z;\Z/p)=A/A\beta$, and $H^*(H\Z/{p^k};\Z/p)=A/A\beta \oplus A/A\beta$ for $k>1$.)
\end{proof}

Thus now assume that for some $c>0$,  $X$ is a $(c-1)$--connected spectrum with finitely presented mod $p$ cohomology.

We need an algebraic lemma. To state this, recall \cite[p.103]{ravenel} that the Steenrod algebra $A$ is the union of the finite dimensional sub--Hopf algebras $A(n)$, where, when $p=2$, $A(n)$ is generated by $Sq^{2^j}$ with $j\leq n$, and when $p$ is odd, $A(n)$ is generated by $\beta$ and $P^{p^j}$ with $j <n$.

\begin{lem} \label{vanishing lemma} (a) If $M$ is a finitely generated $A$--module, then each group $\Ext_{A}^{s,t}(M, \Z/p)$ is finite.  \\

\noindent{\bf (b)} If an $A$--module $M$ has a finite number of relations in an $A$--module  presentation, then, for large enough $n$, there is an $A(n)$--module $N$ such that $M \simeq A \otimes_{A(n)}N$. \\

\noindent (c) Suppose $M$ is an $A$--module of the form $M = A \otimes_{B}N$, with $B$ a finite dimensional sub--Hopf algebra of $A$ having top degree $d(B)$, and $N$ a finitely generated $B$--module with generators $x_1, \dots, x_k$. If $a = \max\{ |x_i|, i=1, \dots, k\}$ and $b=d(B)-1$, then $\Ext_{A}^{s,t}(M, \Z/p) = 0$ unless $t-s \leq a+bs$.
\end{lem}

Assuming this, the proof of the Finiteness Theorem can be finished.  The spectrum $X$ is $(c-1)$--connected for some $c>0$, and has finitely presented mod $p$ cohomology.  The last lemma applies, as does the Connectivity Theorem, repackaged as (\ref{log condition}).  But for fixed $a$ and $b$, and $c>0$, only a finite number of pairs $(s,t)$ satisfy $cp^s \leq t-s < a+bs$, and the size of the image of $h_*$ will be bounded by the size of the sum of the corresponding $\Ext$--groups.

\begin{proof}[Proof of \lemref{vanishing lemma}] Part (a) of the lemma is standard, using that $A$ is a connected algebra of finite type.

To see that statement (b) holds, suppose that $M$ has generators $x_i$, and generating relations $r_1, \dots r_l \in \bigoplus_i \Sigma^{|x_i|}A$.  For big enough $n$, these relations will all be in $\bigoplus_i \Sigma^{|x_i|}A(n)$.  If we let $N = \bigoplus_i \Sigma^{|x_i|}A(n)/A(n)\{r_1, \dots, r_l\}$, then
$$M \simeq A \otimes_{A(n)} N.$$

Now suppose we are in the situation of statement (c).  By \cite[Thm.4.4]{milnor moore} $A$ will be a free $B$--module. We deduce that
$$\Ext_{A}^{s,t}(M, \Z/p) = \Ext_{B}^{s,t}(N, \Z/p) = H^s(\Hom_{B}(P(*), \Sigma^t \Z/p))$$
where $\dots \ra P(2) \ra P(1) \ra P(0) \ra N \ra 0$ is any $B$--projective resolution of $N$.

Recall that $d(B)$ is the top nonzero degree of $B$, and $a = \max\{ |x_i|, i=1, \dots, k\}$.  It is straightforward to show by induction on $s$, that in a minimal projective resolution of the $B$--module $N$, the top nonzero degree of $P(s)$ will be at most $a+d(B)s$.

It follows that, for this resolution,  $\Hom_{B}(P(s), \Sigma^t N) = 0$ if $t>a+d(B)s$, so that
$\Ext_{A}^{s,t}(M, \Z/p) = 0$ unless $t\leq a+d(B)s$, and so, finally, $\Ext_{A}^{s,t}(M, \Z/p) = 0$ unless $t-s\leq a+(d(B)-1)s$.
\end{proof}

\subsection{Proof of the Atomicity Theorem} \label{atomic section}

Suppose $X$ is a connective spectrum and $H^*(X;\Z/p)$ is a finitely presented $A$--module. In terms of the Adams spectral sequence for $X$, the image of the stable mod $p$ Hurewicz map is precisely the graded group $E_{\infty}^{0,*}(X)$. Thus the theorem is asserting that every element in $\pi_{t-s}(X)$ represented by an element in $E_{\infty}^{s,t}(X)$ with $s>0$ must be in the kernel of the mod $p$ Hurewicz map for $\Oinfty \Sigma^c X$ when $c$ is large enough.

By \lemref{vanishing lemma}, there will exist $a$ and $b$ such that $E_{2}^{s,t}(X) \neq 0$ only if $t-s \leq a + bs$.  Since $E_{r}^{s,t}(\Sigma^c X) = E_{r}^{s,t-c}(X)$, we conclude that $E_{\infty}^{s,t}(\Sigma^c X) \neq 0$ only if $t-s \leq c+ a + bs$.  But \thmref{conn thm} implies that if an element in $\pi_{t-s}(X)$ is represented in $E_{\infty}^{s,t}(\Sigma^c X)$ and has nonzero Hurewicz image, then $cp^s \leq t-s$.  But, for fixed $a$ and $b$, if $c$ is large enough the only solutions to $cp^s \leq t-s \leq c+a+bs$ will have $s=0$.

\subsection{The mod 2 Hurewicz map of the connected covers of $BO$} \label{bo section}

In the next few subsections we use the Connectivity Theorem, \thmref{conn thm}, to work out some mod $p$ examples.  All of these examples will also illustrate the Finiteness Theorem, \thmref{finiteness thm}.

We begin with the calculation of
$$ h_*: \pi_*(ko\langle c \rangle) \ra H_*(BO\langle c \rangle;\Z/2)$$
for all $c > 0$.  As computing this is equivalent to computing
$$ H_n(BO\langle n \rangle;\Z/2) \ra H_n(BO\langle c \rangle;\Z/2)$$
for all $n \geq c$, this result is surely accessible by a close reading of \cite{stong}.  However, what we do here involves much less calculation.

It is a standard calculation that the connective real $K$--theory spectrum $ko$ has mod 2 cohomology as a module over the Steenrod algebra given by
$$H^*(ko;\Z/2) = A/A\{Sq^1,Sq^2\} = A \otimes_{A(1)} \Z/2$$
Here $A(1)$ is the sub(Hopf)--algebra generated by $Sq^1$ and $Sq^2$.  This leads to the standard picture \cite[p.66]{ravenel} of the $E_2$--term of the Adams spectral sequence, pictured here as Figure 1, and the conclusion that $E_2 = E_{\infty}$.  (Each $\bullet$ stands for $\Z/2$, as usual.)

\begin{figure}
$$\begin{array}{cc|cccccccccccccc}
 && \vdots& & &  &\vdots & & & &\vdots & &&&\vdots&\\
&8 & \bullet& & & & \bullet & & & &\bullet & &&&\bullet&\\
&7 & \bullet& & &  & \bullet &  &  &  &\bullet  & &&&\bullet&\\
&6 & \bullet& & & & \bullet & & & &\bullet & &\bullet&&&\\
&5 & \bullet& &  &  & \bullet & &  & &\bullet &\bullet &&&&\\
s&4 &\bullet & & & & \bullet & & & &\bullet & &&&&\\
&3 & \bullet& &  &  & \bullet &  &  & & & &&&&\\
&2 &\bullet & & \bullet && & & & & & &&&&\\
&1 & \bullet  & \bullet &  &  &  &  & & & & &&&&\\
&0 & \bullet  &  &  &  &  &  & & & & &&&&\\
\cline{3-16}
\multicolumn{1}{c}{}
& \multicolumn{1}{c}{}
& \multicolumn{1}{c}{0}
& \multicolumn{1}{c}{1}
& \multicolumn{1}{c}{2}
& \multicolumn{1}{c}{3}
& \multicolumn{1}{c}{4}
& \multicolumn{1}{c}{5}
& \multicolumn{1}{c}{6}
& \multicolumn{1}{c}{7}
& \multicolumn{1}{c}{8}
& \multicolumn{1}{c}{9}
& \multicolumn{1}{c}{10}
& \multicolumn{1}{c}{11}
& \multicolumn{1}{c}{12}
& \multicolumn{1}{c}{13}\\
\multicolumn{1}{c}{}
& \multicolumn{1}{c}{}
& \multicolumn{1}{c}{}
& \multicolumn{1}{c}{}
& \multicolumn{1}{c}{}
& \multicolumn{1}{c}{}
& \multicolumn{1}{c}{}
& \multicolumn{1}{c}{}
& \multicolumn{1}{c}{}
& \multicolumn{1}{c}{t-s}
& \multicolumn{1}{c}{}
& \multicolumn{1}{c}{}
& \multicolumn{1}{c}{}
& \multicolumn{1}{c}{}
& \multicolumn{1}{c}{}
& \multicolumn{1}{c}{}
\end{array}$$
\caption{$\Ext_{A}^{s,t}(H^*(ko;Z/2), \Z/2)$}
\end{figure}

It follows quite easily that, when $c\equiv 0,1,2,4 \mod 8$, the corresponding $\Ext$ chart for $ko\langle c\rangle$ looks identical, except that columns with $t-s<c$ are now zero, and Adams filtration $s$ has been decreased so that the bottom class, with degree $t-s=c$, has Adams filtration 0.  Again $E_2 = E_{\infty}$.

For example, the chart for $bo=ko\langle 1 \rangle$ is pictured in Figure 2.  Only the four classes labeled with $\textcolor{cyan}{\bullet}$ satisfy the inequality $t-s \geq 2^s$.  \thmref{conn thm} thus tells us that
$$ h_*: \pi_n(bo) \ra H_n(BO;\Z/2)$$
can only possibly be nonzero when $n=1,2,4,8$.

As suggested in the introduction, this certainly happens: $h_*$ being nonzero in degree $n$ is equivalent to finding a real vector bundle $\xi$ over $S^n$ with top Steifel--Whitney class $w_n(\xi) \neq 0$, and the existence of $\R,\Cx,\Ham$, and $\Oct$ allows one to construct the bundles.

We have thus reproved Milnor's theorem.  It is interesting to see what we have used about $BO$: just that that it is the 0th space of a spectrum $ko$ with $H^*(ko;\Z/2) = A \otimes_{A(1)}\Z/2$.

We can be even more precise about the Hurewicz image in this case.  The image of $h_*$ lands in the primitives of $H_*(BO;\Z/2)$. Dual to the fact that $H^*(BO;\Z/2)$ has one generator in each degree, there is one nonzero primitive in each degree of $H_*(BO;\Z/2)$. If $x \in H_1(BO;\Z/2)$ is the nonzero class, it is clearly primitive, and thus so are $x^{2^k}$ for all $k$.  As $x$ is well known to not be nilpotent, one concludes that $\im h_* = \langle x,x^2,x^4,x^8\rangle$.

Similarly \thmref{conn thm} allows us to deduce the other parts of \thmref{bo thm}:
$$ \im\{ h_*: \pi_*(bso) \ra H_*(BSO;\Z/2)\} = \langle y,y^2,y^4 \rangle \text{ with } |y|=2$$
(no surprise, as $BO = BSO \times B\Z/2$ as spaces),
$$ \im\{ h_*: \pi_*(bspin) \ra H_*(BSpin;\Z/2)\} = \langle z,z^2 \rangle \text{ with } |z|=4,$$
and, for all $c\geq 8$, with $c\equiv 0,1,2,4 \mod 8$,
$$ \im\{ h_*: \pi_*(ko\langle c \rangle) \ra H_*(BO\langle c \rangle;\Z/2)\} = \langle v \rangle \text{ with } |v|=c.$$

\begin{figure}
$$\begin{array}{cc|cccccccccccccc}
 && & & &  &\vdots & & & &\vdots & &&&\vdots&\\

&8 & & & &  &\bullet & & & &\bullet & &&&\bullet&\\
&7 & & & & & \bullet & & & &\bullet & &&&\bullet&\\
&6 & & & &  & \bullet &  &  &  &\bullet  & &&&\bullet&\\
&5 & & & & & \bullet & & & &\bullet & &\bullet&&&\\
s&4 & & &  &  & \bullet & &  & &\bullet &\bullet &&&&\\
&3 & & & & & \bullet & & & &\textcolor{cyan}{\bullet} & &&&&\\
&2 & & &  &  &\textcolor{cyan}{\bullet} &  &  & & & &&&&\\
&1 & & & \textcolor{cyan}{\bullet} && & & & & & &&&&\\
&0 &   & \textcolor{cyan}{\bullet} &  &  &  &  & & & & &&&&\\
\cline{3-16}
\multicolumn{1}{c}{}
& \multicolumn{1}{c}{}
& \multicolumn{1}{c}{0}
& \multicolumn{1}{c}{1}
& \multicolumn{1}{c}{2}
& \multicolumn{1}{c}{3}
& \multicolumn{1}{c}{4}
& \multicolumn{1}{c}{5}
& \multicolumn{1}{c}{6}
& \multicolumn{1}{c}{7}
& \multicolumn{1}{c}{8}
& \multicolumn{1}{c}{9}
& \multicolumn{1}{c}{10}
& \multicolumn{1}{c}{11}
& \multicolumn{1}{c}{12}
& \multicolumn{1}{c}{13}\\
\multicolumn{1}{c}{}
& \multicolumn{1}{c}{}
& \multicolumn{1}{c}{}
& \multicolumn{1}{c}{}
& \multicolumn{1}{c}{}
& \multicolumn{1}{c}{}
& \multicolumn{1}{c}{}
& \multicolumn{1}{c}{}
& \multicolumn{1}{c}{}
& \multicolumn{1}{c}{t-s}
& \multicolumn{1}{c}{}
& \multicolumn{1}{c}{}
& \multicolumn{1}{c}{}
& \multicolumn{1}{c}{}
& \multicolumn{1}{c}{}
& \multicolumn{1}{c}{}
\end{array}$$
\caption{$\Ext_{\A}^{s,t}(H^*(bo;Z/2), \Z/2)$}
\end{figure}

\subsection{The mod 2 Hurewicz map of $tmf$} \label{tmf section}

Let $tmf$ be the connective Topological Modular Forms spectrum, conceived of by Mike Hopkins \cite{hopkins}, and realized by Hopkins and an army of collaborators.  A good source of information, with many references, is \cite{TMF book}.  We compute the mod 2 Hurewicz image for $\Oinfty tmf$ using \thmref{conn thm}.

The $A$--module structure of the mod 2 cohomology is given by
$$H^*(tmf;\Z/2) = A/A\{Sq^1,Sq^2,Sq^4\} = A \otimes_{A(2)} \Z/2,$$ where
$A(2)$ is the sub(Hopf)--algebra generated by $Sq^1, Sq^2, Sq^4$.

In \cite{henriques}, Andr\'e Henriques works out the behavior of the Adams spectral sequence converging to $\pi_*(tmf)$, localized at 2.  We display what we will see is the relevant part of the $E_{\infty}$ page in Figure 3.

\begin{figure}[h]
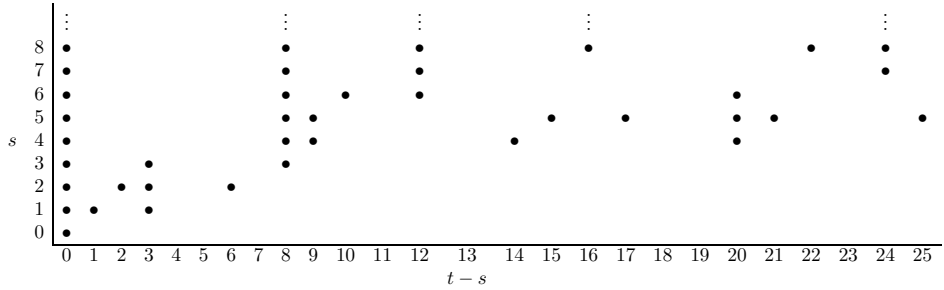

\scalebox{.67}
{$\begin{array}{cc|cccccccccccccc cccccccccccc}
 &  & \vdots  & &&&& & & &\vdots & &&&\vdots&   &&&\vdots&&&&&&&&\vdots&\\
 &8 & \bullet & &&&& & & &\bullet & &&&\bullet&  &&&\bullet &&&&&&\bullet&&\bullet&\\
 &7 & \bullet & && && &&  &\bullet  & &&&\bullet&  &&&&&&&&&&&\bullet&\\
 &6 & \bullet & &  &  && & & &\bullet &&\bullet&&\bullet &  &&&&&&&\bullet&&&&&\\
 &5 & \bullet & &  &  && & & &\bullet &\bullet &&&&&&\bullet&&\bullet&&&\bullet&\bullet&&&&\bullet\\
s&4 & \bullet & &  &  && & & &\bullet &\bullet &&&&&\bullet&&&&&&\bullet &&&&&\\
 &3 & \bullet & & &\bullet &  &  &  & &\bullet& &&&&  &&&&&&&&&&&&\\
 &2 & \bullet &&\bullet &\bullet & & &\bullet & & & &&&&  &&&&&&&&&&&&\\
 &1 & \bullet &\bullet &&\bullet  &  &  & & & & &&&&  &&&&&&&&&&&&\\
 &0 & \bullet &  &  &  &  &  & & & & &&&&  &&&&&&&&&&&&\\
\cline{3-28}
\multicolumn{1}{c}{}
& \multicolumn{1}{c}{}
& \multicolumn{1}{c}{0}
& \multicolumn{1}{c}{1}
& \multicolumn{1}{c}{2}
& \multicolumn{1}{c}{3}
& \multicolumn{1}{c}{4}
& \multicolumn{1}{c}{5}
& \multicolumn{1}{c}{6}
& \multicolumn{1}{c}{7}
& \multicolumn{1}{c}{8}
& \multicolumn{1}{c}{9}
& \multicolumn{1}{c}{10}
& \multicolumn{1}{c}{11}
& \multicolumn{1}{c}{12}
& \multicolumn{1}{c}{13}
& \multicolumn{1}{c}{14}
& \multicolumn{1}{c}{15}
& \multicolumn{1}{c}{16}
& \multicolumn{1}{c}{17}
& \multicolumn{1}{c}{18}
& \multicolumn{1}{c}{19}
& \multicolumn{1}{c}{20}
& \multicolumn{1}{c}{21}
& \multicolumn{1}{c}{22}
& \multicolumn{1}{c}{23}
& \multicolumn{1}{c}{24}
& \multicolumn{1}{c}{25}\\
\multicolumn{1}{c}{}
& \multicolumn{1}{c}{}
& \multicolumn{1}{c}{}
& \multicolumn{1}{c}{}
& \multicolumn{1}{c}{}
& \multicolumn{1}{c}{}
& \multicolumn{1}{c}{}
& \multicolumn{1}{c}{}
& \multicolumn{1}{c}{}
& \multicolumn{1}{c}{}
& \multicolumn{1}{c}{}
& \multicolumn{1}{c}{}
& \multicolumn{1}{c}{}
& \multicolumn{1}{c}{}
& \multicolumn{1}{c}{}
& \multicolumn{1}{c}{t-s}
& \multicolumn{1}{c}{}
& \multicolumn{1}{c}{}
& \multicolumn{1}{c}{}
& \multicolumn{1}{c}{}
& \multicolumn{1}{c}{}
& \multicolumn{1}{c}{}
\end{array}$}
\caption{The $E_{\infty}$ term of the mod 2 Adams spectral sequence for $tmf$}
\end{figure}

As before, one obtains the chart for the $E_{\infty}$ page for $tmf\langle 1 \rangle$ from the chart in Figure 3 by removing the $0$th column and dropping $s$ values by one.  Using that $A(2)$ has top class in degree 23, \lemref{vanishing lemma} shows that $\Ext_A^{s,t}(H^*(tmf\langle 1 \rangle;\Z/2), \Z/2) = 0$ unless $t-s \leq 22(s+1)$. If also $t-s \geq 2^s$, it follows that $s \leq 7$ and $t-s \leq 176$, inside the range of the charts in \cite{henriques}.  Figure 4 shows what we need of the $E_{\infty}$ page for $tmf\langle 1 \rangle$: the  classes satisfying the condition $t-s \geq 2^s$ are depicted as $\textcolor{cyan}{\bullet}$ or $\textcolor{magenta}{\bullet}$.

\begin{thm} \label{tmf thm 2} In Figure 4, classes in $\pi_*(tmf\langle 1\rangle)$ represented by a label $\textcolor{cyan}{\bullet}$ have nonzero Hurewicz image, while classes represented by a
label $\textcolor{magenta}{\bullet}$ map to zero under the Hurewicz map.
\end{thm}

This, of course, proves \thmref{tmf thm}.

\begin{figure}[h]
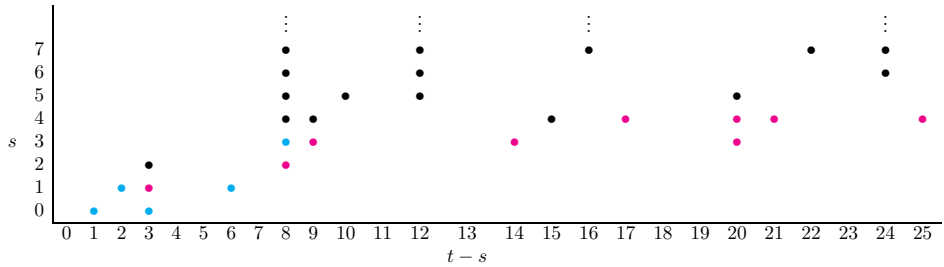

\scalebox{.67}
{$\begin{array}{cc|cccccccccccccc cccccccccccc}
 &  &  & &&&& & & &\vdots & &&&\vdots&   &&&\vdots&&&&&&&&\vdots&\\
 &7 &  & &&&& & & &\bullet & &&&\bullet&  &&&\bullet &&&&&&\bullet&&\bullet&\\
 &6 &  & && && &&  &\bullet  & &&&\bullet&  &&&&&&&&&&&\bullet&\\
 &5 &  & &  &  && & & &\bullet &&\bullet&&\bullet &  &&&&&&&\bullet&&&&&\\
 &4 &  & &  &  && & & &\bullet &\bullet &&&&&&\bullet&&\textcolor{magenta}{\bullet}&&&\textcolor{magenta}{\bullet}
 &\textcolor{magenta}{\bullet}&&&&\textcolor{magenta}{\bullet}\\
s&3 &  & &  &  && & & &\textcolor{cyan}{\bullet} &\textcolor{magenta}{\bullet} &&&&&\textcolor{magenta}{\bullet}&&&&&&\textcolor{magenta}{\bullet}&&&&&\\
 &2 &  & & &\bullet &  &  &  & &\textcolor{magenta}{\bullet}& &&&&  &&&&&&&&&&&&\\
 &1 &  &&\textcolor{cyan}{\bullet} &\textcolor{magenta}{\bullet} & & &\textcolor{cyan}{\bullet} & & & &&&&  &&&&&&&&&&&&\\
 &0 &  &\textcolor{cyan}{\bullet} &&\textcolor{cyan}{\bullet} &  &  & & & & &&&&  &&&&&&&&&&&&\\
\cline{3-28}
\multicolumn{1}{c}{}
& \multicolumn{1}{c}{}
& \multicolumn{1}{c}{0}
& \multicolumn{1}{c}{1}
& \multicolumn{1}{c}{2}
& \multicolumn{1}{c}{3}
& \multicolumn{1}{c}{4}
& \multicolumn{1}{c}{5}
& \multicolumn{1}{c}{6}
& \multicolumn{1}{c}{7}
& \multicolumn{1}{c}{8}
& \multicolumn{1}{c}{9}
& \multicolumn{1}{c}{10}
& \multicolumn{1}{c}{11}
& \multicolumn{1}{c}{12}
& \multicolumn{1}{c}{13}
& \multicolumn{1}{c}{14}
& \multicolumn{1}{c}{15}
& \multicolumn{1}{c}{16}
& \multicolumn{1}{c}{17}
& \multicolumn{1}{c}{18}
& \multicolumn{1}{c}{19}
& \multicolumn{1}{c}{20}
& \multicolumn{1}{c}{21}
& \multicolumn{1}{c}{22}
& \multicolumn{1}{c}{23}
& \multicolumn{1}{c}{24}
& \multicolumn{1}{c}{25}\\
\multicolumn{1}{c}{}
& \multicolumn{1}{c}{}
& \multicolumn{1}{c}{}
& \multicolumn{1}{c}{}
& \multicolumn{1}{c}{}
& \multicolumn{1}{c}{}
& \multicolumn{1}{c}{}
& \multicolumn{1}{c}{}
& \multicolumn{1}{c}{}
& \multicolumn{1}{c}{}
& \multicolumn{1}{c}{}
& \multicolumn{1}{c}{}
& \multicolumn{1}{c}{}
& \multicolumn{1}{c}{}
& \multicolumn{1}{c}{}
& \multicolumn{1}{c}{t-s}
& \multicolumn{1}{c}{}
& \multicolumn{1}{c}{}
& \multicolumn{1}{c}{}
& \multicolumn{1}{c}{}
& \multicolumn{1}{c}{}
& \multicolumn{1}{c}{}
\end{array}$}
\caption{The $E_{\infty}$ term of the mod 2 Adams spectral sequence for $tmf\langle 1\rangle$}
\end{figure}

We work our way through the proof.   Through dimension 6, the map $S \ra tmf$ is an equivalence.  Thus the associated Hurewicz homomorphisms agree in this range.  The classes labeled $\textcolor{cyan}{\bullet}$ in dimensions 1,2,3, and 6 correspond to $\eta, \eta^2, \nu$, and $\nu^2$ in  $\pi_*(S)$.  These are well known to have nonzero Hurewicz image in $H_*(QS^0;\Z/2)$.

The classes labeled $\textcolor{magenta}{\bullet}$ in $(t-s,s)$ bidegrees (8,2), (14,3), and (20,3) can be represented by the image under $\pi_*(S) \ra \pi_*(tmf)$ \cite[\S 8]{bauer} of elements $\epsilon$, $\kappa$, and $\bar{\kappa}$ (named as in Appendix A3 of \cite{ravenel}) with zero Hurewicz image in $H_*(QS^0;\Z/2)$, and thus also in $H_*(\Oinfty tmf;\Z/2)$.

If $c_4 \in \pi_8(tmf\langle 1\rangle) \simeq \Z \oplus \Z/2$ is represented in bidegree (8,3) (and thus bidegree (8,4) in $\pi_*(tmf)$), it will have infinite order and map to a generator of $\pi_8(ko)$ under the canonical map $tmf \ra ko$. (Compare with \cite[Thm.1.2]{lawson naumann}.)  Since such a generator has nonzero Hurewicz image in $H_8(BO;\Z/2)$, $h(c_4) \in H_8(\Oinfty tmf;\Z/2)$ will also be nonzero.

We now make a simple, but useful, observation about general Hurewicz maps $\pi_*(X) \ra R_*(\Oinfty X)$.
\begin{lem} \label{simple lem}  If $\gamma: S^a \ra X$ factors as $S^a \xra{\alpha} Y \xra{\beta} X$, and $h(\alpha) = 0$ in $R_a(\Oinfty Y)$, then $h(\gamma) = 0$ in $R_a(\Oinfty X)$.
\end{lem}
\begin{proof} The adjoint $\gamma_0: S^a \ra \Oinfty X$ of $\gamma$ factors as $S^a \xra{\alpha_0} \Oinfty Y \xra{\Oinfty \beta} \Oinfty X$, where $\alpha_0$ is adjoint to $\alpha$.  So $\alpha_{0*} = 0$ implies $\gamma_{0*} = 0$.
\end{proof}
In the situation of the theorem, the lemma applies to the classes labeled $\textcolor{magenta}{\bullet}$ in Figure 4 in  bidegrees (3,1), (9,3), (17,4), (20,4), and (21,4), which respectively correspond to $2\nu$ (viewed as $\nu \circ 2$), $\epsilon \eta$, $\kappa \nu$, $2\bar{\kappa}$,  and $\bar{\kappa}\nu$.

It remains to determine show that if $\gamma \in \pi_{25}(tmf)$ is represented in Figure 4 by the label $\textcolor{magenta}{\bullet}$ in bidegree (25,4), then $h(\gamma)=0$.  Tilman Bauer \cite[Prop.8.4(1)]{bauer} tells us that one such element satisfies $\gamma \in \langle \bar{\kappa}, \nu, \eta\rangle$. It thus factors as $S^{25} \ra (S^{20} \cup_{\nu} D^{24}) \ra tmf$, and the lemma applies again.

\subsection{The mod $p$ Hurewicz map for $BP\langle n \rangle_c$ and $BP_c$} \label{BP section}

The $p$--local Brown--Peterson spectrum $BP$ is a commutative ring spectrum with mod $p$ cohomology
$$H^*(BP;\Z/p) = A \otimes_{E} \Z/p,$$
where $E$ is an exterior algebra on the Milnor classes $Q_0, Q_1, Q_2, \dots \in A$ with $|Q_i|=2p^i-1$ \cite{brown peterson}. It follows that
$$ \Ext_{A}^{*,*}(H^*(BP;\Z/p),\Z/p) = \Ext_{E}^{*,*}(\Z/p,\Z/p) = \Z/p[v_0, v_1, v_2, \dots],$$
with $v_i \in \Ext_{A}^{1,2p^i-1}$.  As all elements are in even total degree, the Adams spectral sequence collapses, and one sees that
$$ \pi_*(BP) = \Z_{(p)}[v_1, v_2, \dots].$$

We begin this subsection by showing that this information implies that the hypothesis of finite presentation can't be relaxed to finite generation in our Finiteness Theorem, \thmref{finiteness thm}.

\begin{prop} \label{BP prop}  Under the unstable Hurewicz map $$h_*: \pi_*(BP) \ra H_*(\Oinfty BP;\Z/p),$$ $h_*(v_i) \neq 0$ for all $i$.
\end{prop}
\begin{proof}  Let $Y$ be the 0--connected cover of $BP$, so that $\Oinfty Y = \Oinfty_0 BP$, and thus $BP$ and $Y$ have the same unstable Hurewicz image in positive degrees.  As $Y$ is the fiber of $BP \ra H\Z_{(p)}$, it follows easily that one gets the $E_{\infty}$ page of the Adams spectral sequence for $Y$ from that of $BP$ by removing the $0$th column and dropping $s$ values by one.  In particular, all of the $v_i$ for $i \geq 1$ now appear in $E_{\infty}^{0,*}$, and thus are in the image of the stable Hurewicz map $h^{st}_*: \pi_*(Y) \ra H_*(Y;\Z/p)$.  But then these same classes have nonzero Hurewicz image in $H_*(\Oinfty Y;\Z/p)$ as well.
\end{proof}

For $n \geq 0$, let $BP\langle n \rangle$ denote the $n$th Johnson-Wilson spectrum \cite{johnson wilson}, a commutative ring spectrum with mod $p$ cohomology
$$H^*(BP\langle n \rangle;\Z/p) = A \otimes_{E(n)} \Z/2,$$
where $E(n)$ is the exterior algebra on the classes $Q_0, \dots, Q_n \in A$.  As before, it
follows that
$$ \Ext_{A}^{*,*}(H^*(BP\langle n \rangle;\Z/p),\Z/p) = \Ext_{E(n)}^{*,*}(\Z/p,\Z/p) = \Z/p[v_0, \dots, v_n],$$
and
$$ \pi_*(BP\langle n \rangle) = \Z_{(p)}[v_1, \dots, v_n].$$

We apply the Connectivity Theorem to $X= \Sigma^c BP\langle n \rangle$, so that $\Oinfty X = BP\langle n\rangle_c$, to prove \thmref{BPn thm}, which we restate here.

\begin{thm} If $c> 2(p^n-1)/(p-1)$, then the mod p Hurewicz image of $BP\langle n \rangle_c$ is just the bottom $\Z/p$ in degree $c$.  Thus $BP\langle n \rangle_c$ is atomic: it can't be written as a product of two spaces in a nontrivial way.
\end{thm}
\begin{proof}  If $X= \Sigma^c BP\langle n \rangle$, then, for fixed $s$,
$$ \max\{t-s \ | \ \Ext_A^{s,t}(H^*(X;\Z/p),\Z/p) \neq 0\} = c+|v_n|^s =c+ 2s(p^n-1).$$
Thus the Connectivity Theorem implies that the Hurewicz image for $X$ will just be the bottom degree vector space $\widetilde H_c(\Oinfty X;\Z/p) \simeq \Z/p$ if  for all $s \geq 1$, $c+2s(p^n-1) < cp^s$.  This rearranges to say that $c$ is such that
$$c> 2(p^n-1)\frac{s}{(p^s-1)}$$
for all $s \geq 1$, and this is clearly just the condition $c> 2(p^n-1)/(p-1)$.
\end{proof}

Note that we have proved this theorem using only that $BP\langle n \rangle$ has mod $p$ cohomology isomorphic to $A//E(n)$.

In \cite{wilson I}, Steve Wilson does a careful study of the cohomology rings $H^*(BP\langle n\rangle_c; \Z/p)$ and then uses this in \cite{wilson II} to give complete product decompositions of the space $BP_c$ and $BP\langle n \rangle_c$ (including a rather different proof of the atomicity result above).   As we now explain, these decompositions allow us to completely determine the mod $p$ Hurewicz map for the spaces $BP\langle n \rangle_c$ and $BP_c$, for all $c\geq 0$.  

In the next two theorems, let $BP\langle \infty \rangle$ denote $BP$, and, for $c>0$, let $m(c)$ be defined so that
$$ 2(p^{m(c)}-1)/(p-1)<c \leq 2(p^{m(c)+1}-1)/(p-1).$$

\begin{thm} \label{BP thm} For all $0\leq n \leq \infty$, the following hold. \\

\noindent{\bf (a)} \ If $c>0$, $\ker\{h_*: \pi_*(BP\langle n \rangle) \ra H_{*+c}(BP\langle n \rangle_c;\Z/p)\}$ is the subgroup $$F_2\pi_*(BP\langle n \rangle) + \langle v_m \ | \ m\leq m(c)\rangle.$$   Thus $\im h$ has a basis given by $h_*(1)$ and the elements $h_*(v_m)$ with $m(c)<m\leq n$. \\

\noindent{\bf (b)} \ If $p=2$ then, in positive dimensions,
$$\ker\{\pi_*(BP\langle n \rangle) \xra{h_*} H_{*}(BP\langle n \rangle_0;\Z/p)\}$$
is the subgroup $F_3\pi_*(BP\langle n \rangle) + \langle v_iv_j \ | \ i<j\rangle$.  Thus, in positive dimensions, $\im h_*$ has a basis given by the elements $h_*(v_m)$ and $h_*(v_m^2)$with $m\leq n$.\\

\noindent{\bf (c)} \ If $p$ is odd then, in positive dimensions,
$$\ker\{h_*: \pi_*(BP\langle n \rangle) \ra H_{*}(BP\langle n \rangle_0;\Z/p)\}$$
equals $\ker\{h_*: \pi_*(BP\langle n \rangle) \ra H_{*+1}(BP\langle n \rangle_1;\Z/p)\}$. Thus, in positive dimensions, $\im h_*$ has a basis given by the elements $h_*(v_m)$ with $m\leq n$.
\end{thm}

We start the proof of this with another general lemma about the Hurewicz map $h_*: \pi_*(X) \ra R_*(X)$, with $R$ as in \thmref{lifting thm}.  Lemmas like this one have also been exploited by Hadi Zare \cite{zare}.

\begin{lem} \label{simple ring lem}  Let $X$ be a connective ring spectrum and let $\alpha \in \pi_a(X)$ and $\beta \in \pi_b(X)$ be elements with $a \geq b$ and $\beta$ of positive $R$--based Adams filtration.  Then, localized away from $(p-1)!$, $h_*(\alpha \beta)=0$ in $R_{a+b+c}(X_c)$ unless $p=2$, $c=0$, and $a=b$.
\end{lem}
\begin{proof}  Using the ring structure on $X$, $\alpha \beta$ can be written as $S^{a+b} \xra{\Sigma^a \beta} \Sigma^a X \xra{\bar \alpha} X$, where $\bar \alpha$ is $\Sigma^a X \xra{\alpha \sm 1_X} X\sm X \xra{m} X$.  By \lemref{simple lem}, $h_*(\alpha \beta)=0$ in $R_{a+b+c}(X_c)$ if $h_*(\beta)=0$ in $R_{a+b+c}(X_{a+c})$.  The $s=1$ case of our Connectivity Theorem (or, much more simply, the Freudenthal Theorem, except when $p>2$ and $c=0$) now applies to show that $h_*(\beta)=0$ if $a+b+c < p(a+c)$, which rewrites as $b<(p-1)(a+c)$.
\end{proof}

Applying this to the mod $p$ homology Hurewicz map $h_*$ when $X$ is $BP\langle n \rangle$ immediately shows that almost all monomials in the $v_i$ have zero Hurewicz image in $H_*(BP\langle n \rangle_c;\Z/p)$. We conclude that, unless $p=2$ and $c=0$, the elements $h_*(v_m)$ span the Hurewicz image in $H_*(BP\langle n \rangle_c;\Z/p)$, with the elements $h_*(v_m^2)$ also possibly being nonzero in this last case.  As these elements are all in distinct degrees, we will be able to precisely determine the Hurewicz image if $BP\langle n \rangle_c$ can be written as a product of spaces of the sort appearing in \thmref{BPn thm}.

Assembling decomposition results from \cite{wilson II} shows that this is the case.  Recall that $|v_m|=2(p^m-1)$.

\begin{thm}[\cite{wilson II}]  For $0 \leq n \leq \infty$, the space $BP\langle n \rangle_c$ has a decomposition as a product of atomic spaces as follows. \\

\noindent{\bf (a)} If $c>0$ and $n\leq m(c)$, then $BP\langle n \rangle_c$ is atomic. \\

\noindent{\bf (b)} If $c>0$ and $n > m(c)$, then
$$BP\langle n \rangle_c \simeq BP\langle m(c) \rangle_c \times \prod_{m=m(c)+1}^n BP\langle m \rangle_{c+|v_m|}.$$

\noindent{\bf (c)} If $p$ is odd, then
$$BP\langle n \rangle_0 \simeq \Z_{(p)} \times \prod_{m=1}^n BP\langle m \rangle_{|v_m|}.$$

\noindent{\bf (d)} If $p=2$, then
$$BP\langle n \rangle_0 \simeq \Z_{(2)} \times \prod_{m=1}^n (BP\langle m-1 \rangle_{|v_m|} \times BP\langle m \rangle_{2|v_m|}).$$
\end{thm}

Part (a) is just our \thmref{BPn thm} again and appears as Corollary 5.3 of \cite{wilson II}.  Part (b) follows by combining Theorem 5.4 and Corollary 5.5 of \cite{wilson II}.  Looping the decomposition of part (b) when $c=1$ implies parts (c) and (d), noting that, when $p=2$, \cite[Cor.5.5]{wilson II} tells us that
$$BP\langle m \rangle_{|v_m|}  \simeq BP\langle m-1 \rangle_{|v_m|} \times BP\langle m \rangle_{2|v_m|}.$$

\end{document}